\newtheorem{theorem}{Theorem}[section] 
\newtheorem{claim}[theorem]{Claim}
\newtheorem{lemma}[theorem]{Lemma}
\newtheorem{corollary}[theorem]{Corollary} 
\theoremstyle{definition}
\newtheorem{definition}[theorem]{Definition}
\newtheorem{example}[theorem]{Example}
\newtheorem{convention}[theorem]{Convention}
\theoremstyle{remark}
\def\forkindep{\mathrel{\raise0.2ex\hbox{\ooalign{\hidewidth$\vert$\hidewidth\cr\raise-0.9ex\hbox{$\smile$}}}}}
\newcommand{\Ax}{{\rm Ax}}
\newcommand{\cof}{{\rm cof}}
\newcommand{\rg}{{\rm rg}}
\newcommand{\supp}{{\rm supp}}
\newcommand{\cf}{{\rm cf}}
\newcommand{\dom}{{\rm dom}}
\newcommand{\GCH}{{\rm GCH}}
\newcommand{\bff}{{\mathbf f}}
\newcommand{\SP}{{\rm SP}}
\newcommand{\rest}{{\restriction}}
\newcommand{\mn}{{\medskip\noindent}}
\newcommand{\sn}{{\smallskip\noindent}}
\newcommand{\gC}{{\mathfrak C}}
\def\mathunderaccent#1#2 {\let\theaccent#1\skewfactor#2
\mathpalette\putaccentunder}
\def\putaccentunder#1#2{\oalign{$#1#2$\crcr\hidewidth
\vbox to.2ex{\hbox{$#1\skew\skewfactor\theaccent{}$}\vss}\hidewidth}}
\newbox\noforkbox \newdimen\forklinewidth
\noforkbox\hbox{\box1\box0\relax}
\def\unionstick{\mathop{\copy\noforkbox}\limits}
\def\nonfork#1#2_#3{#1\unionstick_{\textstyle #3}#2}
\def\nonforkin#1#2_#3^#4{#1\unionstick_{\textstyle #3}^{\textstyle
    #4}#2}
\newbox\doesforkbox
\doesforkbox\hbox{\box1\box0\relax}
\def\nunionstick{\mathop{\copy\doesforkbox}\limits}
\def\fork#1#2_#3{#1\nunionstick_{\textstyle #3}#2}
\def\forkin#1#2_#3^#4{#1\nunionstick_{\textstyle #3}^{\textstyle
    #4}#2}
\newcommand{\stickT}{%
\setbox255=\hbox{\raise1ex\hbox{$\hspace{0.2pt}\,\bullet\,$}}
\mathord{\rlap{\hbox to\wd255{\hss\hbox{$|$}\hss}}
\box255}
}
\newcommand{\stickS}{%
\setbox255=\hbox{\raise0.6ex\hbox{$\scriptstyle\bullet$}}
\mathord{\rlap{\hbox to\wd255{\hss\hbox{$\scriptstyle|$}\hss}}
\box255}
}
\begin{document}

\title {$\le_{\SP}$ Can Have Infinitely Many Classes \\
1065}
\author {Saharon Shelah and Danielle Ulrich}
\address{Einstein Institute of Mathematics\\
Edmond J. Safra Campus, Givat Ram\\
The Hebrew University of Jerusalem\\
Jerusalem, 9190401, Israel\\
 and \\
 Department of Mathematics\\
 Hill Center - Busch Campus \\ 
 Rutgers, The State University of New Jersey \\
 110 Frelinghuysen Road \\
 Piscataway, NJ 08854-8019 USA}
\email{shelah@math.huji.ac.il}
\urladdr{http://shelah.logic.at}
\thanks{This material is based in part upon work supported by the National
Science Foundation under Grant No. 136974 and by European Research 
Council grant 338821. Paper 1065 on Shelah's list. The first author would like to thank for partially supporting this research an NSF-BSF 2021 grant with M. Malliairis, NSF 205182, BSF 3013005232. The second author partially supported
by Laskowski's NSF grants DMS-1308546 and DMS-2154101.
The authors thank Alice Leonhardt for the beautiful typing.
 References like \cite[Th0.2=Ly5]{Sh:950} means the label of Th.0.2
 is y5.  The reader should note that the version on the first author's website is
usually more updated than the one in the mathematical archive.}

% converted to my LaTeX Sept 2019

\subjclass[2010]{Primary:03C55,03E35; Secondary: }

\keywords{Saturated pairs; simple theories; amalgamation}

\date {September 24, 2019}

\begin{abstract}
Building off of recent results on Keisler's order, we show that
consistently, $\le_{\SP}$ has infinitely many classes. In particular,
we define the property of $\le k$-type amalgamation for simple
theories, for each $2 \le k < \omega$. If we let $T_{n, k}$ be the
theory of the random $k$-ary, $n$-clique free random hyper-graph, then
$T_{n,k}$ has $\le k-1$-type amalgamation but not $\le k$-type
amalgamation. We show that consistently, if $T$ has $\leq k$-type
amalgamation then $T_{k+1, k} \nleq_{\SP} T$, thus producing
infinitely many $\le_{\SP}$-classes. The same construction gives a
simplified proof of the theorem from \cite{ShelahSimple} that
consistently, the maximal $\le_{\SP}$-class is exactly the class of
non-simple theories. Finally, we show that consistently, if $T$ has 
$< \aleph_0$-type amalgamation, then $T \le_{\SP} T_{\rg}$, the theory 
of the random graph.
\end{abstract}

\maketitle
\numberwithin{equation}{section}
\setcounter{section}{-1}
\newpage

\section {Introduction}

\begin{convention}  
\label{a2}
\label{a1}
$T$ is always a complete theory in a countable language. We will fix a 
monster model $\gC \models T$ and work within it so $\gC = \gC_T$ but
if $T$ is clear from the context we do not mention it.

The first author introduced the following definition in
\cite{ShelahSimple}, although he had previously investigated the 
phenomenon in \cite{ShelahIso} (without giving it a name):
\end{convention}

\begin{definition}
\label{a4}
Suppose $\lambda \ge \theta$. Define $\SP_T(\lambda, \theta)$ to mean: 
for every $M \models T$ of size $\lambda$, there is a
$\theta$-saturated $N \models T$ of size $\lambda$ extending $M$. 
\end{definition}

\noindent
In this paper, we will restrict to the following special case:
\begin{definition}
\label{a6}
1) Say that $(\theta,\lambda)$ is a nice pair if $\theta$ is a regular
cardinal and $\lambda \ge \theta$ has $\lambda = 
\lambda^{\aleph_0}$.

\noindent
2)  Given $T_0, T_1$ complete first order theories, say that $T_0
\le_{\SP} T_1$ if whenever $(\theta, \lambda)$ is a nice pair, if 
$\SP_{T_1}(\lambda,\theta)$ then $\SP_{T_0}(\lambda,\theta)$.

Thus, $\le_{\SP}$ is a pre-ordering of theories which measures how
difficult it is to build saturated models. 

In \cite{ShelahIso}, the first author proves: the stable theories are
the minimal $\SP$-class, and non-simple theories are always maximal. 
In \cite{ShelahSimple}, the first author additionally proves that 
consistently, non-simple theories are exactly the maximal class. 

Recently, there has been substantial progress on Keisler's order 
$\trianglelefteq$, another pre-ordering of theories which measures 
how difficult it is to build saturated models; see for instance  
\cite{Optimals} and \cite{InfManyClass} by the first author and 
Malliaris. In particular, in \cite{InfManyClass} it is shown that 
Keisler's order has infinitely many classes, these being seperated 
by certain amalgamation properties. In this paper we use similar ideas 
to continue investigation of $\le_{\SP}$.

In \S2 we summarize what is already known on $\le_{\SP}$.

In \S3, we introduce several amalgamation-related properties of
forcing notions (Definition ~\ref{DefOfAPProps}), and show that it is 
preserved under iterations in a suitable sense 
(Theorem~\ref{IterationPreserves}). In light of this, we define a
class of forcing axioms (Definition~\ref{DefOfAxioms}); these are 
closely related to the forcing axiom $\Ax_{\mu_0}$, defined by the first 
author in \cite{ShelahForcing} and used to demonstrate the consistent 
maximality of non-simple theories under $\le_{\SP}$ in
\cite{ShelahSimple}. 
However, the forcing axioms we develop are designed specifically for
what we want and have been simplified somewhat.

In \S4, we define and prove some helpful facts about non-forking diagrams of models.

In \S5, we introduce, for each $3 \le k < \omega$, a property of
simple theories called $< k$-type amalgamation
(Definition~\ref{DefOfTypeAP}), and discuss some of its
properties. For example, if for $n > k$ we let $T_{n,k}$ be the theory
of the $k$-ary, $n$-clique free hypergraph, then if $k \ge 3$,
$T_{n,k}$ has $<k$-type amalgamation but not $< k+1$-type
amalgamation.  We also show that if $T$ has $<\aleph_0$-type
amalgamation (i.e., $< k$-type amalgamation for all $k$), then
$\SP_T(\lambda,\theta)$ holds whenever we have that there is some
$\theta \le \mu \le \lambda$ with $\mu^{<\theta} \le \lambda$ and
$2^\mu \ge \lambda$ (Theorem~\ref{SatPortion}). This implies that 
if the singular cardinals hypothesis holds, then whenever $T$ has 
$<\aleph_0$-type amalgamation, then $T \le_{\SP} T_{\rg}$, where
$T_{\rg}$ is the theory of the random graph.

In \S6, we put everything together to show that consistently, for all
$k \ge 3$, if $T$ has the $< k$-type amalgamation property, then
$T_{k,k-1} \nleq_{\SP} T$ (Theorem~\ref{InfManyClasses}). In
particular, for $k < k'$, $T_{k+1, k} \nleq_{\SP} T_{k'+1,k'}$; this
is similar to the situation for Kiesler's order in \cite{InfManyClass}.

By a forcing notion, we mean a pre-ordered set $(P,\le^P)$ such that
$P$ has a least element $0^P$ (pre-order means that $\le^P$ is
transitive); we are using the convention where $p \le q$ means $q$ is
a stronger condition than $p$. That is, when we force by $P$ we add a
generic ideal, rather than a generic filter. Thus, a finite sequence 
$(p_i: i < k)$ from $P$ is compatible if it has an upper bound in $P$. 
\end{definition}
\newpage

\section {Background} \label{1}
\bigskip

The following theorem is closely related to the classical 
Hewitt-Marczewski-Pondiczery theorem of topology; the special case $\theta = \aleph_0$ is implied by Theorem 8 of \cite{Partitions}, and the general case is also noted there. It will be central for our investigations.
\bigskip

\begin{theorem}
\label{ER}
Suppose $\theta \le \mu \le \lambda$ are infinite cardinals such that
$\theta$ is regular, $\mu = \mu^{<\theta}$, and $\lambda \le 2^\mu$. 
Then there is a sequence $(\bff_\gamma:\gamma < \mu)$ from ${}^\lambda
\mu$ such that for all partial functions $f$ from $\lambda$ to $\mu$
of cardinality less than $\theta$, there is some $\gamma < \mu$ such
that $\bff_\gamma$ extends $f$. 
If $\lambda > 2^\mu$ then this fails, in fact, there is no sequence $(\mathbf{f}_\gamma: \gamma < \mu)$ from $\,^\lambda 2$ such that for all partial functions $f$ from $\lambda$ to $2$ of cardinality less than $\theta$, there is some $\gamma < \mu$ such that $\mathbf{f}_\gamma$ extends $f$.
\end{theorem}

We will also want the following technical device, which will allow us
to apply Theorem~\ref{ER} to conclude $\SP_T(\lambda,\theta)$ holds. 
Here is the idea: suppose $M \models T$ with $|M| \le \lambda$, and we
want to find some $\theta$-saturated $N \succeq M$ with $|N| \le
\lambda$.  To do this, we will always first find some $N_0 \succeq M$ 
with $|N_0| \le \lambda$ which realizes every type over $M$ of
cardinality less than $\theta$, and then we iterate $\theta$-many
times.  The key step is to find $N_0$, and the following definitions 
capture when this is possible.

\begin{definition}
% \label{}
1) Suppose $T$ is a simple theory, $\theta$ is a regular uncountable
cardinal, and $M_* \preceq M \models T$. then let
$\Gamma^{\theta}_{M,M_*}$ be the forcing notion of all partial types
$p(x)$ over $M$ of cardinality less than $\theta$ which do not 
fork over $M_*$, ordered by inclusion, where $x$ is a single variable. Also, if $p_*(x)$ is a 
complete type over $M_*$, then let $\Gamma^\theta_{M, p_*} \subseteq 
\Gamma^{\theta}_{M,M_*}$ be the set of all $p(x)$ which extend $p_*(x)$.

\noindent
2) Given $(\theta,\lambda)$ a nice pair and given $\mu$ with $\theta
\le \mu \le \lambda$, define $\SP^1_{T}(\lambda,\mu,\theta)$ to mean:
for every $M \models T$ of size $\leq \lambda$ and for every countable 
$M_* \preceq M$, there are complete types $p_\gamma(x):\gamma < \mu$
over $M$ which do not fork over $M_*$, such that whenever $p(x) \in 
\Gamma^\theta_{M,M_*}$, then $p(x) \subseteq p_\gamma(x)$ for some 
$\gamma < \mu$.

\noindent
3) Given in addition a fixed countable $M_* \models T$ and type $p_*(x)$ over
$M_*$, define $\SP^1_{T,p_*}(\lambda,\mu,\theta)$ similarly: whenever 
$M \succeq M_*$ has size at most $\lambda$, there are  are complete, 
non-forking extensions $p_\gamma(x): \gamma < \mu$ of $p_*(x)$ to $M$,
such that whenever $p(x) \in \Gamma^\theta_{M,p_*}$, then $p(x)
\subseteq p_\gamma(x)$ for some $\gamma < \mu$.
\end{definition}

Note that if $\mu \ge 2^{\aleph_0}$, then
$\SP^1_T(\lambda,\mu,\theta)$ if and only if
$\SP^1_{T,p_*}(\lambda,\mu,\theta)$ for every complete type $p_*(x)$
over a countable model $M_*$ (the forward direction is unconditional 
in $\mu$, but for the reverse direction, we need to concatenate
witnesses for each $p_*(x)$, of which there are $2^{\aleph_0}$-many). 
In particular this holds when $\mu = \lambda$, since $\lambda^{\aleph_0} = \lambda$.

\noindent
The following is an important example. Let $T_{\rg}$ be the theory of the random graph, i.e. the model completion of the theory of graphs. $T_{rg}$ admits quantifiers, and given $A \subseteq B$ and $p(x) \in S(B)$, $p$ forks over $A$ if and only if $p$ is realized in $B \backslash A$.
\begin{example}
\label{TrgComp}
\label{2.3}
Suppose $(\theta,\lambda)$ is a nice pair and suppose $\mu$ is a 
cardinal with $\mu = \mu^{<\theta}$ and $\theta \le \mu \le \lambda$. 
Then $\SP^1_{T_{\rg}}(\lambda,\mu,\theta)$ holds if and only if
$\lambda \le 2^\mu$; and this is equivalent to $\SP^1_{T_{\rg},p_*}
(\lambda,\mu,\theta)$ holding for some or any nonalgebraic complete 
type $p_*(x)$ over a countable model $M_*$.
\end{example}

\begin{proof}
Suppose $M \models T$ has size $\le \lambda$.

Then the non-algebraic types in $S^1(A)$
correspond naturally to functions from $A$ to $2$, and so this
is just a restatement of Theorem~\ref{ER}.
\end{proof}

\begin{theorem}\label{SPVariants}
\label{2.4}
Suppose $T$ is a simple theory (in a countable language, as always see
\ref{a1}). 

Suppose $(\theta,\lambda)$ is a nice pair:
\mn
\begin{enumerate}
\item[(A)]  If $\SP^1_T(\lambda,\lambda, \theta)$, then $\SP_T(\lambda, \theta)$.
\sn
\item[(B)] If $T = T_{rg}$ and $\SP_T(\lambda, \theta)$ then $\SP_T(\lambda, \lambda, \theta)$.
\item[(C)]  Suppose $p_*(x)$ is a complete type over a countable model
$M_* \models T$, and $\SP^1_{T,p_*}(\lambda,\lambda,\theta)$ holds, and
$\cof(\lambda) < \theta$. Then for some $\mu$ with $\theta \le \mu <
\lambda,\SP^1_{T,p_*}(\lambda,\mu,\theta)$ holds.
\item[(D)] Suppose $2^{\aleph_0} < \cof(\lambda) < \theta$. Suppose $\SP^1_T(\lambda, \lambda, \theta)$ holds. Then $\SP^1_T(\lambda, \mu, \theta)$ holds for some $\mu < \lambda$.
\end{enumerate}
\end{theorem}

\begin{proof}
% (A) \quad forward direction: Suppose $M \models T$ has size $\le
% \lambda$, and $M_* \preceq M$ is countable. Choose $N \succeq M$, a 
% $\theta$-saturated model of size $\lambda$. Let $(a_\alpha: \alpha < \lambda)$ enumerate all $a \in N$ such that $\mbox{tp}(a/M)$ does not fork over $M_*$. Clearly  

% As $\lambda > \theta \ge \beth_\omega$ by \cite{Sh:460}, there is
% $\kappa = \cf(\kappa) < \beth_\omega$ such that $\lambda =
% \lambda^{[\kappa]}$ which means that there is $\cS \subseteq
% [\lambda]^\kappa$ of cardinality $\lambda$ such that for every $v \in
% [\lambda]^\kappa$ there is $u \in [u]^\kappa$ such that $u \in \cS$.
% For $u \in \cP$ let $p_u(x)$ be $\{\varphi(x,\bar b):\varphi =
% \varphi(x,\bar y) \in \bbL(T),\bar b \in {}^{\ell g(\bar b)} N$ and
% for some $w \in [u]^{< \kappa}$ we have $\alpha \in u \backslash h
% \Rightarrow N \models \varphi[a_\alpha,\bar b]$.

% Clearly $p_u(x)$ is a type in $N$.  Let $\cS_1 = \{u \in \cS:u \in S$
% and $p_u(x)$ extends $p_*(x)$ and does not fork over $M_*\}$ and for
% each $u \in \cS_1$ let $q_u(x) \in \bfS(M)$ extends $p_u(x)$ and does
% not fork over $M_*\}$.  Easily $\cP = \{q_u(x):u \in \cS_1\}$ is as required.

\noindent
(A) \quad Suppose $M \models T$ has size $\le
\lambda$. 
Using $\SP^1_T(\lambda,\lambda,\theta)$, we can find $N \succeq M$ of 
size $\lambda$, such that every partial type $p(x)$ over $M$ of
cardinality less than $\theta$ is realized in $N$, using every type in
$M$ does not fork over some countable submodel of $M$ (we are also using 
$\lambda = \lambda^{\aleph_0}$, so there are only $\lambda$-many
countable elementary submodels $M_*$ of $M$). If we iterate this 
$\theta$-many times then we will get a $\theta$-saturated model of $T$. 

\noindent (B): Suppose $M \models T_{rg}$ has size $\leq \lambda$ and $M_* \preceq M$ is countable. Choose $N \succeq M$, a $\theta$-saturated model of size $\lambda$. Let $a_\alpha: \alpha < \lambda$ enumerate $N$. For each $\alpha < \lambda$ let $p_\alpha(x)$ be the type over $M$ asserting $x \not= a$ for each $a \in M$, and $R(x, a)\in p_\alpha$ if and only if $R(a_\alpha, a)$ holds for each $a \in M$. This is a complete type over $M$ which does not fork over $\emptyset$. Then $\{p_\alpha(x): \alpha < \lambda\}$ along with all algebraic types over $M_*$ witness $\mbox{SP}^1_{T_{rg}}(\lambda, \lambda, \theta)$.

\noindent
(C): Suppose towards a contradiction that
$\SP^1_{T,p_*}(\lambda,\mu,\theta)$ failed for all $\theta \le \mu <
\lambda$. Write $\kappa = \cof(\lambda)$, and let $(\mu_\beta:\beta <
\kappa)$ be a cofinal sequence of cardinals in $\lambda$ with each $u_\beta \geq \theta$. For each 
$\beta < \kappa$, choose $M_\beta \succeq M_*$ with $|M_\beta| \le 
\lambda$, witnessing that $\SP^1_{T,p_*}(\lambda,\mu_\beta,\theta)$
fails.  We can suppose that $(M_\beta:\beta < \kappa)$ is independent over $M_*$.

Let $N \models T$ have size $\le \lambda$ such that each $M_{\beta}
\preceq N$. Then by $\SP^1_{T,p_*}(\lambda,\lambda,\theta)$, we can 
find $(q_\alpha(x):\alpha < \lambda)$ such that each
$q_\alpha(x)$ extends $p_*(x)$, does not fork over $M_*$ and
whenever $q(x) \in \Gamma^\theta_{N,p_*}$, then $q(x) \subseteq q_\alpha(x)$ for 
some $\alpha < \lambda$.

For each $\beta < \kappa$, we can by hypothesis choose $p_\beta(x) \in 
\Gamma^{\theta}_{M^\beta,p_*}$ such that $p_\beta(x) \nsubseteq
q_\alpha(x)$ for any $\alpha < \mu_\beta$; note that still $p(x)
\supseteq p_*(x)$.  By the independence 
theorem for simple theories, $p(x) := \bigcup\limits_{\beta < \kappa} 
p_\beta(x)$ does not fork over $M_*$. Hence $p(x) \subseteq
q_\alpha(x)$ for some $\alpha < \lambda$. Choose $\beta < \kappa$ with 
$\alpha < \mu_\beta$; then this implies that $p_\beta(x) \subseteq
q_\alpha(x)$, a contradiction.

(D): Enumerate, up to isomorphism, all types over countable models $(p_\alpha(x, M_\alpha): \alpha < 2^{\aleph_0})$. For each $\alpha < 2^{\aleph_0}$, $\SP^1_{T, p_\alpha}(\lambda, \lambda, \theta)$ holds, so by (C) there is $\mu_\alpha < \lambda$ such that $\SP^1_{T, p_\alpha}(\lambda, \mu_\alpha, \theta)$ holds. Let $\mu$ be the supremum of $2^{\aleph_0}$ and $\{\mu_\alpha: \alpha < 2^{\aleph_0}\}$; then $\mu < \lambda$ and easily $\SP^1_T(\lambda, \mu, \theta)$ holds. 
\end{proof}

\noindent
Finally, the following theorem is a collection of most of what has
been previously known on $\le_{\SP}$.
\begin{theorem}
\label{Old}
\label{2.5}  
Suppose $T$ is a complete first order theory in a countable language. 

Suppose $(\theta, \lambda)$ is a nice pair:
\mn
\begin{enumerate}
\item[(A)]  If $\lambda = \lambda^{<\theta}$, then
  $\SP_T(\lambda,\theta)$ holds; if $T$ is non-simple then the converse
  is true as well. Thus non-simple theories are all
  $\le_{\SP}$-maximal. 
(See \cite{ShelahSimple} Conclusion 4.6 and Theorem 4.7.)
\sn
\item[(B)]  $T_{\rg}$ is the $\le_{\SP}$-minimal unstable theory. 
(Theorem 4.8 of \cite{ShelahSimple}.)
\sn
\item[(C)]  If $T$ is stable, then $\SP_T(\lambda,\theta)$ holds 
(see \cite{ShelahSimple} Theorem 4.7(2)). 
\sn
\item[(D)]  If $\lambda$ is a strong limit with $\cof(\lambda) <
  \theta$ (and as $\lambda = \lambda^{\aleph_0}$, we have $\aleph_0 < \cof(\lambda))$, and if
  $\SP_T(\lambda,\theta)$ holds, then $T$ is stable. 
(Theorem 4.7(4) of \cite{ShelahSimple}.) Thus the stable theories
are exactly the minimal $\le_{\SP}$-class.  Also, under $\GCH$, all 
unstable theories are maximal.
\sn
\item[(E)]  If $\theta \le \mu \le \lambda$ and $\mu^{<\theta} = \mu$
  and $\lambda \le 2^\mu$, then $\SP_{T_{\rg}}(\lambda,\theta)$
holds. (This is Exercise VIII 4.5 in \cite{ShelahIso}.)
\sn
\item[(F)]  It is consistent that there exists a nice pair
  $(\theta,\lambda)$ such that for all simple
  $T,\SP_T(\theta,\lambda)$ holds. 
Hence, it is consistent that the non-simple theories are exactly 
the $\le_{\SP}$-maximal class. (This is Theorem 4.10 of \cite{ShelahSimple}.)
\end{enumerate}
\end{theorem}

\noindent
For the reader's convenience, we prove (A) through (E), making use 
of the language of $SP^1$. Theorem (F) will be a special case of our 
main theorem, namely Theorem~\ref{InfManyClasses}(B).
\begin{proof}
(A): By standard arguments, if $\lambda^{<\theta} = \lambda$ then 
$\SP_T(\lambda,\theta)$ holds. Suppose $T$ is non-simple, and
$\SP_T(\lambda,\theta)$ holds, and suppose towards a contradiction 
that $\lambda^{<\theta} > \lambda$. Choose a formula $\phi(x,y)$ with 
the tree property (possibly $y$ is a tuple).

Let $\kappa < \theta$ be least such that $\lambda^\kappa > \lambda$. 
Choose $M \models T$ and $(a_\eta:\eta \in {}^{<\kappa} \lambda)$ such 
that for all $\eta \in {}^\kappa \lambda,p_\eta(x) := \{\phi(x,a_{\eta
  \rest_\beta}): \beta < \kappa\}$ is consistent, and for all $\eta
\in {}^{<\kappa} \lambda$ and for all $\alpha < \beta < \lambda,
\phi(x,a_{\eta \frown (\alpha)})$ and $\phi(x,a_{\eta \frown
  (\beta)})$ are inconsistent. Note that each $|p_\eta(x)| < \theta$; 
but clearly if $N \succeq M$ realizes each $p_\eta(x)$ then $|N| \ge 
\lambda^\kappa > \lambda$.

\noindent
(B): Suppose $T$ is unstable; we show $T_{\rg} \le_{\SP} T$. 
By (A), this is true if $T$ is non-simple, so we can suppose that $T$ is
simple, hence has the independence property via some formula
$\phi(x,y)$.  Now suppose $(\theta,\lambda)$ is a nice pair. 
By Theorem~\ref{SPVariants}(B), it suffices to show that if 
$\SP_T(\lambda,\theta)$ holds, then
$\SP^1_{T_{\rg}}(\lambda,\lambda,\theta)$ holds.  
Choose some $(a_\alpha:\alpha <
\lambda)$ from $\gC$ such that for all $\bff:\lambda \rightarrow 2,
\{\phi(x,a_\alpha)^{\bff(\alpha)}:\alpha < \lambda\}$ is consistent. 
By $\SP_T(\lambda,\theta)$ we can find some $\theta$-saturated 
$M \prec \gC$ with $|M| \le \lambda$ and each $a_\alpha \in M$.

Suppose $N \models T_{\rg}$ has cardinality $\lambda$ , say 
$N = \{b_\alpha:\alpha < \lambda\}$ without repetitions. For each $c
\in M,$ let $p_c(x)$ be the complete nonalgebraic type over $N$, defined 
by putting $R(x, b_\alpha) \in p_c(x)$ if and only if $M \models 
\phi(c,a_\alpha)$. Then recalling the proof of \ref{2.3} this witnesses
$\SP^1_{T_{\rg}}(\lambda,\lambda,\theta)$ holds (since $|M| \le \lambda$).

\noindent
(C): Suppose $T$ is stable. It suffices to show that
$\SP^1_T(\lambda,\lambda,\theta)$ holds. But this is clear: 
given $M \models T$ of size $\le \lambda$ and $M_* \preceq M$
countable, there are $\le 2^{\aleph_0} \le \lambda^{\aleph_0} = \lambda$
many types over $M$ that do not fork over $M_*$, seeing as types 
over $M_*$ are stationary. 

\noindent
(D): Suppose towards a contradiction that $\SP_{T}(\lambda,\theta)$
holds for some unstable $T$. Then in particular
$\SP_{T_{\rg}}(\lambda,\theta)$ holds.  Let $p_*(x)$ be a complete
non-algebraic type over some countable $M_* \models T_{\rg}$. 
By Theorem~\ref{SPVariants} we can find $\theta \le \mu < \lambda$
such that $\SP^1_{T_{\rg},p_*}(\lambda,\mu,\theta)$ holds. By possibly 
replacing $\mu$ with $\mu^{<\theta}$ we can suppose $\mu =
\mu^{<\theta}$. Then this contradicts Example~\ref{TrgComp}, since
$2^\mu < \lambda$.

\noindent
(E): By Example~\ref{TrgComp} and Theorem~\ref{SPVariants}(A).

\noindent
(F): See \cite{ShelahSimple} or \cite{GIL02}.
\end{proof}

\noindent
If the singular cardinals hypothesis holds, then we can say more. 
Recall that
\begin{definition}
% \label{}
The singular cardinals hypothesis states that if $\lambda$ 
is singular and $2^{\cof(\lambda)} < \lambda$, then
$\lambda^{\cof(\lambda)} = \lambda^+$. (Note that $2^{\cof(\lambda)}
\ne \lambda$ since $\cof(2^\kappa) > \kappa$ for all cardinals
$\kappa$, by K\"{o}nig's theorem.)

The failure of the singular cardinals hypothesis is a large cardinal
axiom; see Chapter 5 of \cite{Jech}.
\end{definition}

\noindent
We want the following simple lemma.

\begin{lemma}
\label{SCHLemma}
Suppose the singular cardinals hypothesis holds. Suppose $\theta$ is
regular, $\lambda \ge \theta$, $\lambda^{<\theta} > \lambda$, and
$2^{<\theta} \le \lambda$. Then for every $\mu < \lambda,
\mu^{<\theta} < \lambda$. Further, $\lambda$ is singular of cofinality $<\theta$.
\end{lemma}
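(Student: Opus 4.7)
The plan is to use the standard consequence of SCH (Theorem 5.22 in \cite{Jech}): for infinite cardinals $\lambda, \kappa$ with $\lambda \geq 2$, $\lambda^\kappa = 2^\kappa$ when $2^\kappa \geq \lambda$, $\lambda^\kappa = \lambda$ when $2^\kappa < \lambda$ and $\kappa < \mbox{cof}(\lambda)$, and $\lambda^\kappa = \lambda^+$ when $2^\kappa < \lambda$ and $\kappa \geq \mbox{cof}(\lambda)$. The overall strategy is to first pin down $\mbox{cof}(\lambda)$, then upgrade the hypothesis on $2^{<\theta}$, then plug everything back into the same SCH formula applied to $\mu^\kappa$.

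First I would establish $\mbox{cof}(\lambda) < \theta$ (hence $\lambda$ is singular, since $\lambda \geq \theta > \mbox{cof}(\lambda)$). Indeed, if $\mbox{cof}(\lambda) \geq \theta$, then for every $\kappa < \theta$ one has $\kappa < \mbox{cof}(\lambda)$ and $2^\kappa \leq \lambda$; the SCH formula then gives $\lambda^\kappa = \lambda$ in both subcases $2^\kappa < \lambda$ and $2^\kappa = \lambda$, so $\lambda^{<\theta} = \theta \cdot \lambda = \lambda$, contradicting the hypothesis.

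Next I would upgrade $2^{<\theta} \leq \lambda$ to $2^{<\theta} < \lambda$. Fix any $\kappa^* < \theta$ with $\lambda^{\kappa^*} > \lambda$ (which exists, else $\lambda^{<\theta} \leq \theta \cdot \lambda = \lambda$); because $2^{\kappa^*} \leq \lambda$, the SCH formula forces $\lambda^{\kappa^*} = \lambda^+$, so in particular $2^{\kappa^*} < \lambda$ and $\kappa^* \geq \mbox{cof}(\lambda)$. If $2^{\kappa_0} = \lambda$ held for some $\kappa_0 < \theta$, then K\"onig's theorem would give $\kappa_0 < \mbox{cof}(2^{\kappa_0}) = \mbox{cof}(\lambda) \leq \kappa^*$, hence $2^{\kappa^*} \geq 2^{\kappa_0} = \lambda$, a contradiction; thus $2^\kappa < \lambda$ for every $\kappa < \theta$. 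To pass from this pointwise statement to $\sup_{\kappa < \theta} 2^\kappa < \lambda$, I would suppose instead that this sup equals $\lambda$, and inductively extract an increasing sequence $(\kappa_i)_{i < \mbox{cof}(\lambda)}$ in $\theta$ with $(2^{\kappa_i})_i$ cofinal in $\lambda$; by regularity of $\theta$ together with $\mbox{cof}(\lambda) < \theta$, $\tau := \sup_i \kappa_i$ still lies in $\theta$, so $2^\tau \geq \sup_i 2^{\kappa_i} = \lambda$, contradicting $2^\tau < \lambda$.

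Finally, given $\mu < \lambda$, the singularity of $\lambda$ gives $\mu^+ < \lambda$ (else $\lambda$ is a successor, hence regular); the SCH formula applied to $\mu^\kappa$ gives $\mu^\kappa \leq \max(\mu^+, 2^\kappa) \leq \max(\mu^+, 2^{<\theta}) < \lambda$ for every $\kappa < \theta$, and since $\theta < \lambda$ (regular versus singular) the sum $\mu^{<\theta} = \sum_{\kappa < \theta} \mu^\kappa$ is at most $\max(\theta, \mu^+, 2^{<\theta}) < \lambda$. The main obstacle is the middle step: the hypothesis allows $2^{<\theta} = \lambda$ as a possibly non-attained supremum, and ruling this out requires combining K\"onig's inequality with the interplay between the regularity of $\theta$ and $\mbox{cof}(\lambda) < \theta$.
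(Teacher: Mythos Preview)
Your proof is correct, but it is organized quite differently from the paper's and is longer in one key place. The paper proceeds in the order: first $2^{<\theta} < \lambda$, then $\mu^{<\theta} < \lambda$ for all $\mu < \lambda$, and only then $\mbox{cof}(\lambda) < \theta$ (the last step being immediate once the second is known, since $\mbox{cof}(\lambda) \geq \theta$ would give $\lambda^{<\theta} = \sup_{\mu < \lambda} \mu^{<\theta} \leq \lambda$). The paper dispatches $2^{<\theta} < \lambda$ in one line via the absorption identity $(2^{<\theta})^{<\theta} = 2^{<\theta}$ for regular $\theta$: if $2^{<\theta} = \lambda$ then $\lambda^{<\theta} = \lambda$, contradiction. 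You instead establish $\mbox{cof}(\lambda) < \theta$ first and then spend a paragraph on the $2^{<\theta} < \lambda$ step, using K\"onig's theorem and a cofinal-extraction argument exploiting $\mbox{cof}(\lambda) < \theta$ and the regularity of $\theta$; this works but is considerably more labor than the absorption identity. For the main clause the paper argues by minimal counterexample, while you give a uniform direct bound $\mu^\kappa \leq \max(\mu^+, 2^\kappa)$ from the SCH formula; both are fine, and your version has the minor advantage of making the bound explicit. Overall your route is a genuine alternative---it front-loads the cofinality conclusion and applies the Jech SCH formula uniformly---but the paper's ordering lets it avoid your entire middle step.
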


\begin{proof}
First of all, note that $2^{<\theta} < \lambda$, as otherwise 
$\lambda^{<\theta} = \lambda$.

Now suppose towards a contradiction there were some $\mu < \lambda$
with $\mu^{<\theta} \ge \lambda$; then necessarily $\mu^{<\theta} >
\lambda$, as otherwise again $\lambda^{<\theta} = \lambda$. We can
choose $\mu$ least with $\mu^{<\theta} > \lambda$. 
Let $\kappa < \theta$ be least such that $\mu^\kappa > \lambda$.

Note that $2^\kappa < \mu$, as otherwise $2^\kappa = (2^\kappa)^\kappa
\ge \mu^\kappa > \lambda$, contradicting $2^{<\theta} < \lambda$. 
Thus, by a consequence of the singular cardinals hypothesis 
(Theorem~{5.22}(ii)(b),(c) of \cite{Jech}), $\mu^\kappa \le
\mu^+$. But since $\mu < \lambda$, $\mu^+ \le \lambda$, so this is a contradiction.

To finish, suppose towards a contradiction that $\cof(\lambda) \ge
\theta$.  Then $\lambda^{<\theta} = \lambda + \sup\{\mu^{<\theta}:\mu <
\lambda\} = \lambda$, a contradiction.
\end{proof}

\begin{theorem}
\label{SCHTheorem}
Suppose the singular cardinals hypothesis holds, and suppose
$(\theta,\lambda)$ is a nice pair. Then $\SP^1_T(\lambda,\lambda,\theta)$ holds
if and only if $T$ is stable, or $\lambda = \lambda^{<\theta}$,
or else $T$ is simple and for every complete type $p_*(x)$ over a
countable model $M_* \models T$, there is some $\mu$ with $\theta \le \mu <
\lambda$ and with $\mu^{<\theta} = \mu$ and $2^\mu \ge \lambda$, 
such that $\SP^1_{T, p_*}(\lambda, \mu, \theta)$ holds.
\end{theorem}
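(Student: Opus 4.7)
The plan is to split into the ``if'' and ``only if'' directions. For the ``if'' direction, the stable case follows immediately from Theorem~\ref{Old}(C) and the $\lambda = \lambda^{<\theta}$ case from Theorem~\ref{Old}(A); so the real content lies in the third case, where $T$ is simple and for every complete type $p_*$ over a countable $M_*$ a suitable $\mu_{p_*}$ exists. There I would first observe that $SP^1_{T, p_*}(\lambda, \mu_{p_*}, \theta)$ trivially upgrades to $SP^1_{T, p_*}(\lambda, \lambda, \theta)$ by padding the witnessing sequence with repetitions, and then invoke the remark following the definition of $SP^1_{T, p_*}$: since $\lambda = \lambda^{\aleph_0} \geq 2^{\aleph_0}$, concatenating witnesses over the $2^{\aleph_0}$-many choices of $p_*$ yields $SP^1_T(\lambda, \lambda, \theta)$, which is equivalent to $SP_T(\lambda, \theta)$ by Theorem~\ref{SPVariants}(A).

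For the ``only if'' direction, assume $SP_T(\lambda, \theta)$ holds while $T$ is unstable and $\lambda^{<\theta} > \lambda$. Then Theorem~\ref{Old}(A) forces $T$ to be simple (else $SP_T(\lambda, \theta)$ would demand $\lambda = \lambda^{<\theta}$), and Theorem~\ref{Old}(B) gives $SP_{T_{rg}}(\lambda, \theta)$. Before invoking Lemma~\ref{SCHLemma} I would verify its remaining hypothesis $2^{<\theta} \leq \lambda$: taking any $\theta$-saturated $M \models T_{rg}$ of size $\lambda$, every $A \subseteq M$ with $|A| = \kappa < \theta$ carries $2^\kappa$ complete $1$-types, all realized in $M$, so $2^\kappa \leq \lambda$. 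Lemma~\ref{SCHLemma} then yields both $\mbox{cof}(\lambda) < \theta$ and $\mu^{<\theta} < \lambda$ for all $\mu < \lambda$, the two facts that drive the remainder of the argument.

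Now fix an arbitrary complete type $p_*$ over a countable $M_* \models T$. Theorem~\ref{SPVariants}(A) yields $SP^1_{T, p_*}(\lambda, \lambda, \theta)$, and Theorem~\ref{SPVariants}(B) (applicable because $\mbox{cof}(\lambda) < \theta$) then delivers $\mu_1 < \lambda$ with $SP^1_{T, p_*}(\lambda, \mu_1, \theta)$. Repeating the same argument inside $T_{rg}$ for any nonalgebraic complete type $p_*'$ over a countable model produces $\mu_2 < \lambda$ with $SP^1_{T_{rg}, p_*'}(\lambda, \mu_2, \theta)$; replacing $\mu_2$ by $\mu_2^{<\theta}$ (still below $\lambda$ by Lemma~\ref{SCHLemma}) and applying Example~\ref{TrgComp} gives $2^{\mu_2} \geq \lambda$. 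Setting $\mu = (\max(\mu_1, \mu_2))^{<\theta}$, Lemma~\ref{SCHLemma} keeps $\mu < \lambda$, and by construction $\mu^{<\theta} = \mu$, $SP^1_{T, p_*}(\lambda, \mu, \theta)$ holds by padding, and $2^\mu \geq 2^{\mu_2} \geq \lambda$ — exactly the witness required by the theorem's third clause.

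The main obstacle I anticipate is engineering a single cardinal $\mu$ attached to $p_*$ that simultaneously satisfies $SP^1_{T, p_*}(\lambda, \mu, \theta)$ and the cardinal-arithmetic inequality $2^\mu \geq \lambda$. I would circumvent this by extracting the arithmetic constraint from the random graph — via Example~\ref{TrgComp}, any $T_{rg}$-witness automatically satisfies $2^{\mu_2} \geq \lambda$ — and then combining the $T$-witness with the $T_{rg}$-witness via a common upper bound, relying on Lemma~\ref{SCHLemma} to keep this upper bound strictly below $\lambda$.
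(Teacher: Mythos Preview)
Your proposal is correct and follows essentially the same route as the paper. The only cosmetic difference is that where the paper cites Theorem~\ref{Old}(D) to conclude that $\lambda$ is not a strong limit (hence some $\mu_* < \lambda$ has $2^{\mu_*} \geq \lambda$), you unpack that reference by going directly through $T_{rg}$ and Example~\ref{TrgComp}; since the proof of Theorem~\ref{Old}(D) is exactly that argument, the two are the same. Your explicit construction of $\mu = (\max(\mu_1,\mu_2))^{<\theta}$ just spells out what the paper leaves implicit in its terse final sentence invoking Theorem~\ref{SPVariants}(B).
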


\begin{proof}
If $T$ is stable or $\lambda = \lambda^{<\theta}$, then 
$\SP^1_T(\lambda,\lambda,\theta)$ holds. 
If $T$ is non-simple and $\lambda < \lambda^{<\theta}$, then $\SP^1_T(\lambda, \lambda, \theta)$ fails by Theorem~\ref{Old}(A) and Theorem~\ref{SPVariants}(A).
Thus we can assume $T$ is unstable, simple (hence has the independence
property) and $\lambda < \lambda^{<\theta}$. 

It suffices to show that $\SP^1_T(\lambda,\lambda,\theta)$ holds if
and only if for every complete type $p_*(x)$ over a countable model
$M_*$, there is some $\theta \le \mu < \lambda$ with $\mu^{<\theta} =
\mu$ and $2^\mu \ge \lambda$, such that $\SP^1_{T,p_*}(\lambda,\mu,\theta)$ holds. 

Suppose first $\SP^1_T(\lambda,\lambda,\theta)$ holds, and $p_*(x)$ is
given. Since $T$ is unstable with the independence property,
$\SP^1_T(\lambda,\lambda,\theta)$ clearly implies that $2^{<\theta}
\le \lambda$. Hence, by Lemma~\ref{SCHLemma}, $\lambda$ is singular
with $\cof(\lambda) < \theta$, and there are cofinally many $\mu <
\lambda$ with $\mu^{<\theta} = \mu$. By Theorem~\ref{Old}(D),
$\lambda$ is not a strong limit. Thus by Theorem~\ref{SPVariants}(C),
we can find $\theta \le \mu < \lambda$ such that $\mu = \mu^{<\theta}$
and $2^\mu \ge \lambda$ and $\SP^1_{T,p_*}(\lambda,\mu,\theta)$ holds.

Conversely, we have in particular that each
$\SP^1_{T,p_*}(\lambda,\lambda,\theta)$ holds; since $\lambda =
\lambda^{\aleph_0} \ge 2^{\aleph_0}$ we get that 
$\SP^1_T(\lambda,\lambda,\theta)$ holds.
\end{proof}
\newpage

\section {Forcing Axioms} \label{3}
\bigskip

In this section, we introduce the forcing axioms which will produce
the desired behavior in $\SP$. It is well-known that the countable
chain condition is preserved under finite support iterations; we aim
to find generalizations to the $\kappa$-closed, $\kappa^+$-c.c. context.
\bigskip

\begin{definition}
% \label{}
For a regular cardinal $\theta$ and sets $X,Y$, define $P_{XY\theta}$ to the
forcing notion of all partial functions from $X$ to $Y$ of cardinality
less than $\theta$, ordered by inclusion. Note that $P_{XY \theta}$
has the $|Y^{<\theta}|^+$-c.c. by the $\Delta$-system lemma and is $\theta$-closed.
\end{definition}

\begin{definition}
\label{DefOfAPProps}
Suppose $P,Q$ are forcing notions, and suppose $k \ge 3$ is a cardinal
(typically finite). Then say that $P \rightarrow_{k} Q$ if there is a
dense subset $P_0$ of $P$ and a map $F:P_0 \rightarrow Q$ such that
for all sequences $(p_i: i < i_*)$ from $P_0$ with $i_* < k$, if
$(F(p_i):i < i_*)$ is compatible in $Q$ (that is, has a common upper
bound), then $(p_i:i < i_*)$ has a least upper bound in $P$; we write 
$F:(P,P_0) \rightarrow_k Q$. Say that $P \rightarrow_{k}^w Q$ 
(where $w$ stands for weak) if there is a map $F:P \rightarrow Q$ such
that whenever $(p_i: i < i_*)$ is a sequence from $P$ with $i_* < k$,
if $(F(p_i):i < i_*)$ is compatible in $Q$, then $(p_i:i < i_*)$ is
compatible in $P$.

Suppose $P$ is a forcing notion, $\aleph_0< \theta \le \mu$ are
cardinals with $\theta$ regular, and $3 \le k \le \theta$ is a 
cardinal (often finite). Then say that $P$ has the
$(<k,\mu,\theta)$-amalgamation property if every ascending chain from
$P$ of length less than $\theta$ has a least upper bound in $P$, and
for some set $X$, $P \rightarrow_{k} P_{X \mu \theta}$.
\end{definition}

\noindent
For example, $P_{X \mu \theta}$ has the $(<k, \mu, \theta)$-amalgamation property.
\newline
The following lemma sums up several obvious facts.
\begin{lemma}
\label{ObviousLemmasForcingAP}
Suppose $\aleph_0 < \theta \le \mu$ are cardinals with $\theta = \cf(\lambda) >
\aleph_0$, and $3 \le k \le \theta$ is a cardinal. 
\mn
\begin{enumerate}
\item   If $P \rightarrow_{k} Q$ and $Q \rightarrow_{k}^w Q'$ 
then $P \rightarrow_k Q'$.
\sn
\item   If $P,Q$ have the $(<k,\mu,\theta)$-amalgamation property,
  then $P$ forces that $\check{Q}$ has the
  $(<k,|\mu|,\theta)$-amalgamation property. (We write $|\mu|$ because
  possibly $P$ collapses $\mu$.) (This is where we use $k
  \le \theta$.)
\sn
\item  Suppose $P$ has the $(<k,\mu,\theta)$-amalgamation property for
  some $k \ge 3$. Then $P$ is $\theta$-closed (hence
  $<\theta$-distributive) and $(\mu^{<\theta})^+$-c.c.
\sn
\item  If $P$ is $\theta$-closed and has the least upper bound
  property, then $P$ has the $(<k,\mu,\theta)$-amalgamation property
  if and only if $P \rightarrow_k^w P_{\lambda \mu \theta}$ for some $\lambda$.
\end{enumerate}
\end{lemma}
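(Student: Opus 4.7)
For (1), the plan is immediate composition. I would define $H := G \circ F$ on $P_0$; given a sequence $(p_i : i < i_*)$ from $P_0$ with $i_* < k$ whose images $H(p_i) = G(F(p_i))$ are compatible in $Q'$, the weak amalgamation property of $G$ yields that $(F(p_i))$ is compatible in $Q$, and then the strong property of $F$ supplies a least upper bound of $(p_i)$ in $P$. Thus $H : (P, P_0) \rightarrow_k Q'$.

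For (3), the distributivity claim is built in: the definition requires every $<\theta$-ascending chain to have a LUB, which makes $P$ $<\theta$-closed and hence $<\theta$-distributive. For the chain condition, I would fix a witness $F : (P, P_0) \rightarrow_k P_{X \mu \theta}$ and specialize to $k = 3$. First, reduce to antichains $A \subseteq P_0$ by density, replacing each $a \in A$ by some $a' \in P_0$ above it; this replacement is injective, since two distinct $A$-elements below the same $P_0$-element would be compatible. Next, observe that $F$ is injective on $A$ and that $F(A)$ is an antichain in $P_{X\mu\theta}$; both facts use the $k = 3$ case, since if $F(p_0)$ and $F(p_1)$ are compatible in $P_{X\mu\theta}$ then $\{p_0,p_1\}$ has a LUB in $P$, forcing $p_0 = p_1$ inside any antichain. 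Since $P_{X\mu\theta}$ itself has $(\mu^{<\theta})^+$-c.c.\ by the $\Delta$-system argument recalled in its definition, this yields $|A| \leq \mu^{<\theta}$.

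For (2), my plan is to leverage (3). Since $P$ has $(<k,\mu,\theta)$-AP, $P$ is $<\theta$-distributive, so any $<\theta$-sequence of ground-model conditions appearing in $V^P$ is already in $V$. I would then take the ground-model witness $G : (Q, Q_0) \rightarrow_k P_{X \mu \theta}$ for the AP of $Q$ and verify that it continues to work in $V^P$: $Q_0$ remains dense in $\check{Q}$, any $<k$-sequence of $Q_0$-conditions in $V^P$ is in $V$ by distributivity, and so compatibility of its $G$-image in $V^P$ reduces to the ground-model statement, where the original AP of $Q$ provides a LUB. The $|\mu|$ in the conclusion is purely cardinal bookkeeping, since $P$ may collapse $\mu$ but does not alter $P_{X\mu\theta}$ as a set or partial order.

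For (4), the forward direction amounts to extending an $\rightarrow_k$-witness to an $\rightarrow_k^w$-witness: given $F : (P, P_0) \rightarrow_k P_{X\mu\theta}$, choose for each $p \in P \setminus P_0$ some $p' \in P_0$ above $p$ and set $\tilde{F}(p) = F(p')$; compatibility of $(\tilde{F}(p_i))$ produces a LUB for the $p_i'$, hence compatibility of $(p_i)$. For the reverse, set $P_0 = P$ and use the given $F$ directly; compatibility of $(F(p_i))$ then yields compatibility of $(p_i)$ in $P$, and the LUB property upgrades ``compatible'' to ``has a LUB'', delivering the $\rightarrow_k$ condition. The only real conceptual obstacle in the whole lemma is in (2), where preservation of AP into $V^P$ rests on the $<\theta$-distributivity of $P$ established in (3); the other items are essentially unwinding definitions.
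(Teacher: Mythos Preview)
Your proposal is correct; the paper does not actually prove this lemma, introducing it only as a collection of ``obvious facts,'' and your arguments fill in the details along the natural lines. The one point worth flagging is in the reverse direction of (4): your step ``the LUB property upgrades `compatible' to `has a LUB'\,'' presumes that \emph{the least upper bound property} in the hypothesis means that bounded (finite) subsets have least upper bounds, not merely that $<\theta$-chains do; this is the reading that makes the statement true and is surely what the authors intend, but it is worth making explicit since the phrase is not defined in the paper.
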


\noindent
We note the following:
\begin{lemma}
\label{muNice}
Suppose $\aleph_0 < \theta \le \mu$ are cardinals with $\theta$
regular, and $3 \le k \le \theta$. Then $P$ has the
$(<k,\mu,\theta)$-amalgamation property if and only if $P$ has the
$(<k,\mu^{<\theta},\theta)$-amalgamation property. 
\end{lemma}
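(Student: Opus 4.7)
The plan hinges on the fact that the ``least upper bounds of ascending chains of length $<\theta$'' clause in the $(<k,\mu,\theta)$-AP definition is independent of $\mu$, so the lemma reduces to comparing the two instances of the relation: $P\rightarrow_k P_{X\mu\theta}$ for some $X$ versus $P\rightarrow_k P_{X'\mu^{<\theta}\theta}$ for some $X'$.

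The forward direction should be immediate. Identify $\mu$ with an initial segment of the initial ordinal $\mu^{<\theta}$ to obtain a natural inclusion $P_{X\mu\theta}\hookrightarrow P_{X\mu^{<\theta}\theta}$. Any fewer-than-$k$ conditions $f_i\in P_{X\mu\theta}$ that are compatible in the larger poset have $\bigcup_i f_i$ a partial function into $\mu$, hence compatible in the smaller. So post-composing any witness $F\colon(P,P_0)\rightarrow_k P_{X\mu\theta}$ with the inclusion still witnesses $P\rightarrow_k P_{X\mu^{<\theta}\theta}$.

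The reverse direction is the substance. Given $F\colon(P,P_0)\rightarrow_k P_{X\mu^{<\theta}\theta}$, I will construct an ``unfolding'' $\Phi\colon P_{X\mu^{<\theta}\theta}\to P_{Y\mu\theta}$ with $Y=X\times\theta$ such that compatibility of $(\Phi(f_i):i<i_*<k)$ implies compatibility of $(f_i:i<i_*)$; then $\Phi\circ F$ witnesses $P\rightarrow_k P_{Y\mu\theta}$. Fix a bijection $\sigma\colon\mu\times 2\to\mu$, which exists since $\mu\geq\theta>\aleph_0$. For $f\in P_{X\mu^{<\theta}\theta}$ and $x\in\dom(f)$, write $f(x)=\langle\gamma_\alpha^x:\alpha<\alpha_x\rangle$ with $\alpha_x<\theta$, and set $\Phi(f)(x,\alpha)=\sigma(\gamma_\alpha^x,0)$ for $\alpha<\alpha_x$, and $\Phi(f)(x,\alpha_x)=\sigma(0,1)$. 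Regularity of $\theta$ ensures $|\dom(\Phi(f))|<\theta$, so $\Phi(f)\in P_{Y\mu\theta}$.

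The main (and essentially only) obstacle is verifying that $\Phi$ preserves compatibility; the ``terminator'' $\sigma(0,1)$ is designed precisely for this. If the $\Phi(f_i)$'s are compatible and $x\in\dom(f_i)\cap\dom(f_j)$, then the sequence-lengths $\alpha_{i,x},\alpha_{j,x}$ must coincide --- otherwise the shorter one's terminator $\sigma(0,1)$ clashes with the longer one's ordinary value $\sigma(\gamma_\ast^{x,\cdot},0)$ via injectivity of $\sigma$ --- and coordinatewise agreement then forces $f_i(x)=f_j(x)$. Hence $\bigcup_i f_i$ is a function, the $f_i$'s are compatible in $P_{X\mu^{<\theta}\theta}$, and the assumed AP supplies a least upper bound in $P$ for the corresponding $p_i$'s.
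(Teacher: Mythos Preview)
Your proposal is correct and follows essentially the same route as the paper: both reduce the nontrivial direction to constructing a map $P_{X\,\mu^{<\theta}\,\theta}\to P_{X'\mu\theta}$ witnessing $\rightarrow_k^w$, by viewing an element of $\mu^{<\theta}$ as a sequence in ${}^{<\theta}\mu$ and unfolding it over an enlarged index set. The only cosmetic difference is the bookkeeping for sequence length: the paper takes $X'=X\times(\theta+1)$ and records the length $\alpha_x$ at the extra coordinate $(x,\theta)$, whereas you take $X'=X\times\theta$ and use a bijection $\sigma:\mu\times 2\to\mu$ to place a distinguished terminator value at position $(x,\alpha_x)$; both devices serve exactly the same purpose of forcing $\alpha_{i,x}=\alpha_{j,x}$ under compatibility.
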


\begin{proof}
Define $\mu' = \mu^{<\theta}$, and let $\lambda$ be a cardinal. It
suffices to show there is a cardinal $\lambda'$ such that $P_{\lambda
  \mu' \theta} \rightarrow_k^w P_{\lambda' \mu \theta}$, by 
Lemma~\ref{ObviousLemmasForcingAP}(1). Write $Y' = {}^{<\theta} \mu$;
it suffices to find a set $X'$ such that $P_{\lambda Y' \theta} 
\rightarrow_k^w P_{X' \mu \theta}$.

Let $X' = \lambda \times (\theta+1)$. Define $F:P_{\lambda Y' \theta}
\rightarrow P_{X' \mu \theta}$ as follows. Let $f \in P_{\lambda Y'
  \theta}$ be given. Let $\dom(F(f)) = \{((\gamma,\delta):\gamma \in
\dom(f)$ and either $\delta < \dom(f(\gamma))$ or $\delta =
\theta\}$.  Define $F(f)(\gamma,\delta) = f(\gamma)(\delta)$ if
$\delta < \theta$, and otherwise $F(f)(\gamma,\theta)=
\dom(f(\gamma))$. 
Clearly this works. 
\end{proof}

\noindent
The following is key; it states that the
$(<k,\mu,\theta)$-amalgamation property is preserved under 
$<\theta$-support iterations. Note that it follows that the
$(<k,\mu,\theta)$-amalgamation property is preserved under $<\theta$-support products.
\begin{theorem}
\label{IterationPreserves}
Suppose $\theta$ is a regular uncountable cardinal, $\mu \ge \theta$
and $3 \le k \le \theta$. Suppose $(P_\alpha:\alpha \le
\alpha_*),(\dot{Q}_\alpha:\alpha < \alpha_*)$ is a $<\theta$-support
forcing iteration, such that each $P_\alpha$ forces that
$\dot{Q}_\alpha$ has the $(<k,|\mu|,\theta)$-amalgamation property. 
Then $P_{\alpha_*}$ has the $(<k,\mu,\theta)$-amalgamation property.
\end{theorem}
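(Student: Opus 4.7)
The plan is induction on $\alpha_*$. The base case $\alpha_*=0$ is trivial, so I would focus on the successor step and split the limit step by $\cf(\alpha_*)$.

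For the successor case $\alpha_*=\alpha+1$, I would fix the inductive witness $F_\alpha : (P_\alpha, P_\alpha^0) \rightarrow_k P_{X_\alpha \mu \theta}$ and a $P_\alpha$-name for a witness $\dot{G}_\alpha : (\dot{Q}_\alpha, \dot{R}_\alpha^0) \rightarrow_k P_{\dot{Y}_\alpha |\mu| \theta}$ supplied by the hypothesis. Since $P_\alpha$ has the $(\mu^{<\theta})^+$-c.c.\ and is $<\theta$-distributive (Lemma~\ref{ObviousLemmasForcingAP}(3)), I would absorb $\dot{Y}_\alpha$ into a ground-model set $Y$ of size at most $\mu^{<\theta}$, so that $P_{Y\mu\theta}$ means the same thing in $V$ and in $V[G_\alpha]$ (invoking Lemma~\ref{muNice} if needed). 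Let $P_0\subseteq P_{\alpha+1}$ be the set of conditions $(p,\dot{q})$ such that $p\in P_\alpha^0$, $p\forces \dot{q}\in \dot{R}_\alpha^0$, and $p$ decides $\dot{G}_\alpha(\dot{q})=\check{s}$ for some $s\in P_{Y\mu\theta}$; this set is dense by a standard strengthening. Define $F(p,\dot{q}):=(F_\alpha(p),s)\in P_{(X_\alpha\sqcup Y)\mu\theta}$. If $(F(p_i,\dot{q}_i))_{i<i_*}$ with $i_*<k$ are compatible, both coordinates are compatible, so the inductive hypothesis yields a least upper bound $p$ of the $p_i$ in $P_\alpha$, and below $p$ the decided values $\check{s}_i$ are compatible in $P_{Y\mu\theta}$, so the $\dot{Q}_\alpha$-amalgamation property gives a l.u.b.\ $\dot{q}$ of the $\dot{q}_i$. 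The pair $(p,\dot{q})$ is then a least upper bound of $((p_i,\dot{q}_i))_{i<i_*}$ in the iteration; length-$<\theta$ chains are handled by iterating this construction coordinatewise.

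For the limit case with $\cf(\alpha_*)\geq\theta$, every $p\in P_{\alpha_*}$ has support of size $<\theta$ contained in some $\beta<\alpha_*$, and any fewer-than-$k$ many conditions from $P_0:=\bigcup_\beta P_\beta^0$ all fit into a single $P_\beta^0$ (since $k\leq\theta\leq\cf(\alpha_*)$), so taking $F$ to be the disjoint union of the $F_\beta$ into $P_{(\bigsqcup_\beta X_\beta)\mu\theta}$ lets the inductive hypothesis at that $\beta$ furnish the least upper bound. For the limit case with $\cf(\alpha_*)<\theta$, I would fix a cofinal sequence $(\beta_\xi)_{\xi<\cf(\alpha_*)}$ and, on the dense set $P_0$ of $p$ whose restrictions $p\restriction\beta_\xi$ all lie in $P_{\beta_\xi}^0$ (density uses $<\theta$-distributivity of each $P_{\beta_\xi}$ combined with $\cf(\alpha_*)<\theta$), define $F(p):=\bigsqcup_\xi F_{\beta_\xi}(p\restriction\beta_\xi)$ into $P_{(\bigsqcup_\xi X_{\beta_\xi})\mu\theta}$. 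Coordinatewise compatibility then yields coordinatewise l.u.b.s from the inductive hypothesis, which assemble into a l.u.b.\ in $P_{\alpha_*}$ because in a $<\theta$-support iteration a condition is determined by its restrictions to a cofinal sequence.

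The main obstacle is the bookkeeping in the successor step: replacing the $P_\alpha$-name $\dot{Y}_\alpha$ by a ground-model index set so that $P_{Y\mu\theta}$ can serve as the target of $F$, and verifying that the combined $(p,\dot{q})$ is truly the \emph{least} upper bound of the $(p_i,\dot{q}_i)$ in the quotient ordering, not merely an upper bound. Both limit cases are then a direct synthesis of the inductive witnesses, with the cofinality dichotomy dictating whether the construction is a union (direct-limit regime) or a coordinatewise combination (inverse-limit regime).
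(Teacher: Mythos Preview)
Your induction breaks down at limit stages. In the $\cf(\alpha_*)\ge\theta$ case you take $F$ to be the disjoint union of the $F_\beta$, but then conditions $p_i\in P_{\beta_i}^0$ with distinct $\beta_i$ have images lying in disjoint coordinate blocks of $P_{(\bigsqcup_\beta X_\beta)\mu\theta}$ and are therefore \emph{automatically} compatible, regardless of whether the $p_i$ are. Your assertion that the $p_i$ ``all fit into a single $P_\beta^0$'' is what would be needed, but it is unjustified: they fit into a single $P_\beta$ (by the cofinality bound), but nothing in your induction arranges $P_{\beta'}^0\subseteq P_\beta^0$ or $F_\beta\supseteq F_{\beta'}$ for $\beta'<\beta$, so you cannot apply $F_\beta$ to them or invoke the hypothesis at $\beta$. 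The $\cf(\alpha_*)<\theta$ case has the same defect in another guise: the set $\{p:p\restriction\beta_\xi\in P_{\beta_\xi}^0\}$ is dense but not open in $P_{\alpha_*}$ (strengthening $p$ changes $p\restriction\beta_\xi$ and may knock it out of $P_{\beta_\xi}^0$), so $<\theta$-distributivity does not let you intersect over $\xi$, and a naive diagonal construction fails for the same reason.

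The paper sidesteps this coherence problem by not inducting on $\alpha_*$ at all. It works directly with the iterand witnesses $\dot{F}_\alpha:(\dot{Q}_\alpha,\dot{Q}_\alpha^0)\to_k P_{\lambda\mu\theta}$ and, for each $p$, runs an $\omega$-length deciding procedure: build $\mathbf{q}_0(p)=p\le\mathbf{q}_1(p)\le\cdots$ so that $\mathbf{q}_{n+1}(p)\restriction\alpha$ decides $\dot{F}_\alpha(\mathbf{q}_n(p)(\alpha))$ for every $\alpha$, take the least upper bound $\mathbf{q}_\omega(p)$, and let $F(\mathbf{q}_\omega(p))$ encode all the decided values $f_{n,\alpha}$. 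Compatibility of these encodings then allows one to reconstruct a least upper bound of $(q_i:i<i_*)$ by a single induction along the coordinates $\alpha<\alpha_*$, using the iterand amalgamation at each step. The missing idea in your proposal is this $\omega$-fusion against the \emph{iterand} data, which produces a single global dense set and map in one pass rather than trying to glue together separately chosen witnesses for the $P_\beta$.
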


\begin{proof}
Let $\lambda$ be large enough.

Inductively, choose $(P^0_\alpha: \alpha \le \alpha_*,
\dot{Q}^0_\alpha:\alpha < \alpha_*)$ a $<\theta$-support forcing
iteration, and $(\dot{F}_\alpha: \alpha<\alpha_*)$, such that each
$P^0_\alpha$ is dense in $P_\alpha$ (and hence $<\theta$-distributive), and each $P_\alpha$ forces
$\dot{F}_\alpha:(\dot{Q}_\alpha,\dot{Q}^0_\alpha) \rightarrow_k 
\check{P}_{\lambda \mu \theta}$. There is a subtlety here: $\dot{Q}^0_\alpha$ needs to be a $P^0_\alpha$-name for $\dot{Q}_\alpha$, not just a $P_\alpha$-name. This follows from a general fact that if $P$ is a forcing notion and $P_0$ is dense in $P$ then any $P$-name is forced to be equivalent to a $P_0$-name; this can be checked by an induction on the foundation rank of $P$-names.

By revising the choice of $\dot{Q}^0_\alpha$ and $\dot{F}_\alpha$, we can suppose $\dot{Q}^0_\alpha$ contains the minimal element $0^{\dot{Q}_\alpha}$ of $\dot{Q}_\alpha$ and we can suppose $\dot{F}_\alpha$ is forced to take $0^{\dot{Q}_\alpha}$ to the empty function in $\check{P}_{\lambda \mu \theta}$.

\bigskip

\begin{claim}
% \label{}
For each $\gamma_*< \theta$, if $(p_\gamma: \gamma < \gamma_*)$ is an
ascending chain from $P_{\alpha_*}$, then it has a least upper bound
$p$ in $P_{\alpha_*}$, such that $\supp(p) \subseteq 
\bigcup\limits_{\gamma < \gamma_*} \supp(p_\gamma)$.
\end{claim}

\begin{proof}
By induction on $\alpha \le \alpha_*$, we construct $(q_\alpha:\alpha
\le \alpha_*)$ such that each $q_\alpha \in P_\alpha$ with
$\supp(q_\alpha) \subseteq \bigcup_{\gamma < \gamma_*} \supp(p_\gamma)
\cap \alpha$, and for $\alpha < \beta \le \alpha_*,q_{\beta}
\rest_\alpha = q_\alpha$, and for each $\alpha \le \alpha_*$,
$q_\alpha$ is a least upper bound of $(p_\gamma \restriction_\alpha: 
\gamma < \gamma_*)$ in $P_{\alpha}$. At limit stages there is nothing
to do; so suppose we have defined $q_\alpha$. If $\alpha \notin 
\bigcup\limits_{\gamma < \gamma_*} \supp(p_\gamma)$ then let
$q_{\alpha+1} = q_\alpha \frown (0^{\dot{Q}_\alpha})$. Otherwise,
since $q_\alpha$ forces that $(p_\gamma(\alpha): \gamma < \gamma_*)$
is an ascending chain from $\dot{Q}_\alpha$, we can find $\dot{q}$, a 
$P_\alpha$-name for an element of $\dot{Q}_\alpha$, such that
$q_\alpha$ forces $\dot{q}$ is the least upper bound. Let
$q_{\alpha+1} = q_\alpha \frown(\dot{q})$.
\end{proof}
\bigskip

Now suppose $p \in P_{\alpha_*}^0$. 
Note that $\mbox{supp}(p) \in [\alpha_*]^{<\theta}$. 

By a similar proof to the claim we can find, for each $n < \omega$, elements $\mathbf{q}_n(p) \in P^0_{\alpha_*}$with $\mathbf{q}_0(p)= p$, so that for all $n< \omega$:
\mn
\begin{itemize}
\item $\mathbf{q}_{n+1}(p) \geq \mathbf{q}_n(p)$;
\item For all $\alpha <\alpha_*$, $\mathbf{q}_{n+1}(p) \restriction_\alpha$ decides $\dot{F}_\alpha(\mathbf{q}_{n}(p)(\alpha))$.  (This is automatic whenever $\alpha \not \in \mbox{supp}(\mathbf{q}_{n})$, since then $P$ forces that $\dot{F}_\alpha(\mathbf{q}_n(p)(\alpha)) = \emptyset$.)
\end{itemize}

So we can choose $f_{n, \alpha,p} \in P_{\lambda \mu \sigma}$ such that each $\mathbf{q}_{n+1}(p) \restriction_\alpha$ forces that $\dot{F}_\alpha(\mathbf{q}_{n}(p)(\alpha)) = \check{f}_{n, \alpha,p}$.

Let $\mathbf{q}_\omega(p) \in P$ be the least upper bound of $(\mathbf{q}_n(p): n < \omega)$, which is possible by the claim. Let $P^0 = \{\mathbf{q}_\omega(p): p \in P^0_{\alpha_*}\}$. For each $q \in P^0$, choose $\mathbf{p}(q) \in P^0_{\alpha_*}$ such that $q = \mathbf{q}_\omega(\mathbf{p}(q))$. For each $n < \omega$, let $\mathbf{p}_n(q) = \mathbf{q}_{n}(\mathbf{p}(q))$, and for each $\alpha < \alpha_*$, let $f_{n, \alpha,q} = f_{n, \alpha, \mathbf{p}(q)}$.

Thus we have arranged that for all $q \in P^0$, $q$ is the least upper bound of $(\mathbf{p}_n(q): n < \omega)$, and for all $n < \omega$ and $\alpha < \alpha_*$, $\mathbf{p}_{n+1}(q) \restriction_\alpha$ forces that $\dot{F}_\alpha(\mathbf{p}_n(q)(\alpha)) = \check{f}_{n, \alpha}(q)$.

Write $X = \omega \times \alpha_* \times \lambda$. Choose $F: P^0 \to P_{X \mu \theta}$ so that for all $q, q'\in P^0$, if $F(q)$ and $F(q')$ are compatible, then for all $n < \omega$ and for all $\alpha <\alpha_*$, $f_{n, \alpha, q}$ and $f_{n, \alpha, q'}$ are compatible. For instance, let the domain of $F(q)$ be the set of all $(n, \alpha, \beta)$ such that $\beta$ is in the domain of $f_{n, \alpha, q}$, and let $F(q)(n, \alpha, \beta) = f_{n, \alpha, q}(\beta)$. 

Now suppose $(q_i: i < i_*)$ is a sequence from $P^0$ with $i_* < k$, such that $(F(q_i): i < i_*)$ are compatible.  Write $\Gamma = \bigcup_{i < i_*, n < \omega} \mbox{supp}(\mathbf{p}_{n}(q_i))$. 

By induction on $\alpha \leq \alpha_*$, we construct a least upper bound $s_\alpha$ to $(\mathbf{p}_{n}(q_i) \restriction_\alpha: i < i_*, n < \omega)$ in $P_\alpha$, such that $\mbox{supp}(s_\alpha) \subseteq \Gamma \cap \alpha$, and for $\alpha < \alpha'$, $s_{\alpha'} \restriction_\alpha = s_\alpha$. 

Limit stages of the induction are clear. So suppose we have constructed $s_\alpha$. If $\alpha \not \in \Gamma$ clearly we can let $s_{\alpha+1} = s_\alpha \,^\frown (0^{\dot{Q}_\alpha})$; so suppose instead $\alpha \in \Gamma$. Let $n < \omega$ be given. Then $(f_{n, \alpha, q_i}: i< i_*)$ are compatible, and $s_\alpha$ forces that $\dot{F}_\alpha(\mathbf{p}_{n}(q_i)(\alpha)) = \check{f}_{n, \alpha, q_i}$ for each $i < i_*$, since $\mathbf{p}_{n+1}(q_i) \restriction_\alpha$ does. Thus $s_\alpha$ forces that $(\mathbf{p}_n(q_i)(\alpha): i < i_*)$ has a least upper bound $\dot{r}_n$. Now $s_\alpha$ forces that $(\dot{r}_{n}: n < \omega)$ is an ascending chain in $\dot{Q}_\alpha$, so let $\dot{q}$ be such that $s_\alpha$ forces $\dot{q}$ is a least upper bound to $(\dot{r}_n: n < \omega)$. Let $s_{\alpha+1} = s_\alpha \,^\frown(\dot{q})$.

Thus the induction goes through, and $s_{\alpha_*}$ is a least upper bound $(q_i: i < i_*)$.
\end{proof}

The following class of forcing axioms, for $k = 3$, is related to Shelah's Ax$\mu_0$ from \cite{ShelahForcing} although the formulation is different. 

\begin{definition}\label{DefOfAxioms}
Suppose $\aleph_0 < \theta = \theta^{<\theta} \leq \lambda$, and suppose $3 \leq k < \omega$. Then say that Ax$(<k, \theta, \lambda)$ holds if for every forcing notion $P$ such that $|P| \leq \lambda$ and $P$ has the $(<k, \theta, \theta)$-amalgamation property, if $(D_\alpha: \alpha < \lambda)$ is a sequence of dense subsets of $P$, then there is an ideal of $P$ meeting each $D_\alpha$. (By  dense, we mean upwards dense: for every $p \in P$, there is $q \in D_\alpha$ with $q \geq p$.) Say that Ax$(<k, \theta)$ holds iff Ax$(<k, \theta, \lambda)$ holds for all $\lambda < 2^\theta$.
\end{definition}

By a typical downward Lowenheim-Skolem argument we could drop the condition that $|P| \leq \lambda$ in Ax$(k, \theta, \lambda)$, but we won't need this. Finally, note that Ax$(k, \theta, \lambda)$ implies that $2^\theta > \lambda$, since $P_{\theta 2 \theta}$ has the $(<k, \theta, \theta)$-amalgamation property and there is a family $2^\theta$ dense sets such that no ideal meets them all.

\begin{theorem}
\label{ForcingAxiomsCons}
Suppose $\aleph_0 < \theta$ are cardinals such that $\theta$ is regular and $\theta = \theta^{<\theta}$, and suppose $3 \leq k \leq \theta$. Suppose $\kappa \geq \theta$ has $\kappa^{<\kappa} = \kappa$. Then there is a forcing notion $P$ with the $(<k, \theta, \theta)$-amalgamation property (in particular, $\theta$-closed and $\theta^+$-c.c.), such that $P$ forces that Ax$(<k, \theta)$ holds and that $2^\theta = \kappa$. We can arrange $|P| =  \kappa$.
\end{theorem}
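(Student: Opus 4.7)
The plan is a $<\theta$-support iteration $(P_\alpha, \dot{Q}_\alpha : \alpha < \kappa)$ of length $\kappa$ with bookkeeping, modelled on the Solovay--Tennenbaum construction of Martin's Axiom but with the countable chain condition replaced by the $(<k, |\mu|, \theta)$-amalgamation property. By Theorem~\ref{IterationPreserves}, setting $P := P_\kappa$ automatically gives $P$ the $(<k, \mu, \theta)$-amalgamation property, whence by Lemma~\ref{ObviousLemmasForcingAP}(3) $P$ is $\mu^+$-c.c.\ and, since each $\dot{Q}_\alpha$ is forced to be $\theta$-closed, $\theta$-closed. A routine induction using $\kappa^{<\kappa} = \kappa$ together with the $\mu^+$-c.c.\ of each $P_\alpha$ gives $|P_\alpha| \leq \kappa$ for every $\alpha \leq \kappa$, so $|P| = \kappa$.

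For the bookkeeping, fix (using $\kappa^{<\kappa} = \kappa$) an enumeration $\{(\alpha_\beta, \dot{R}_\beta, \dot{E}_\beta) : \beta < \kappa\}$ with every value hit cofinally, where $\dot{R}_\beta$ is a nice $P_{\alpha_\beta}$-name for a forcing of size $<\kappa$ that $P_{\alpha_\beta}$ already forces to have the $(<k, \theta, \theta)$-amalgamation property, and $\dot{E}_\beta$ is a $P_{\alpha_\beta}$-name for a $<\kappa$-sequence of dense subsets of $\dot{R}_\beta$. At stage $\beta$ I set $\dot{Q}_\beta := \dot{R}_\beta$; this is legitimate because by Lemma~\ref{ObviousLemmasForcingAP}(2) the amalgamation property of $\dot{R}_\beta$ persists from $V^{P_{\alpha_\beta}}$ to $V^{P_\beta}$. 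On a cofinal subset of stages I instead take $\dot{Q}_\beta = P_{\theta \theta \theta}$ to ensure $2^\theta \geq \kappa$ in $V^P$; the matching upper bound $2^\theta \leq \kappa$ comes from counting nice $P$-names for subsets of $\theta$ via $|P| = \kappa$, the $\mu^+$-c.c., and $\kappa^{<\kappa} = \kappa$.

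To verify \textrm{Ax}$(k, \theta, \lambda)$ in $V^P$ for each $\lambda < \kappa$, suppose $\dot{R}, \dot{E}$ are $P$-names for a candidate counterexample. The $\mu^+$-c.c.\ of $P$, together with $\lambda < \kappa$ and $\kappa$ regular (from $\kappa^{<\kappa}=\kappa$), lets me reflect: the $\lambda$ underlying elements of $\dot{R}$ are each encoded by an antichain of size $\leq \mu$, and collecting the $<\theta$-supports gives a subset of $\kappa$ of size $\leq \lambda \cdot \mu \cdot \theta < \kappa$; picking $\alpha < \kappa$ containing this set makes $\dot{R}$ and $\dot{E}$ equivalent to $P_\alpha$-names. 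The main obstacle is to ensure that $P_\alpha$ already forces $\dot{R}$ to have the $(<k, \theta, \theta)$-amalgamation property, i.e., that this property is downward absolute from $V^P$ to $V^{P_\alpha}$; I plan to obtain this by showing that the quotient $P/P_\alpha$ itself has the $(<k, \mu, \theta)$-amalgamation property (by an argument paralleling Theorem~\ref{IterationPreserves} applied to the tail of the iteration), so that any $V^P$-witness $F : (\dot{R}, \dot{R}_0) \rightarrow_k P_{X\theta\theta}$ can be pushed down to a witness in $V^{P_\alpha}$ using the $<\theta$-chain least upper bounds. Once this is established, the bookkeeping provides some $\beta > \alpha$ with $\dot{Q}_\beta = \dot{R}$, and the generic ideal added at stage $\beta$ meets every dense set in $\dot{E}$, supplying the required object in $V^P$.
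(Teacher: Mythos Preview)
Your approach is essentially the paper's: a $<\theta$-support iteration of length $\kappa$ with Solovay--Tennenbaum style bookkeeping, using Theorem~\ref{IterationPreserves} to propagate the $(<k,\mu,\theta)$-amalgamation property and the $\mu^+$-c.c.\ to control the counting. The paper is terser than you --- it simply sets up the iteration so that every small $P_\alpha$-name $\dot{Q}$ which $P_\alpha$ forces to have the $(<k,\mu,\theta)$-amalgamation property eventually appears as some $\dot{Q}_\beta$, and then says ``$P_{\alpha_*}$ then works, easily'' --- so the downward-absoluteness issue you flag is real but left implicit there.

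Your proposed fix (show the tail $P/P_\alpha$ has the $(<k,\mu,\theta)$-amalgamation property via Theorem~\ref{IterationPreserves} in $V^{P_\alpha}$, then push the $V^P$-witness down) is correct; indeed the pushdown is exactly the content of the proof of Lemma~\ref{PresOfFail}, so you can just cite that rather than redo it. There is also a shorter route worth noting: when you reflect $\dot{R}$ to a $P_\alpha$-name, reflect the witness $F:(R,R_0)\rightarrow_k P_{X\theta\theta}$ at the same time (it has size ${<}\kappa$ and $X$ can be taken of size $|R_0|\cdot\theta$). The statement ``$F:(R,R_0)\rightarrow_k P_{X\theta\theta}$ and $<\theta$-chains in $R$ have least upper bounds'' is $\Pi_1$ in the parameters $R,R_0,F,X$ and hence downward absolute from $V^P$ to $V^{P_\alpha}$, so no extra work is needed. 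Either way the argument goes through.
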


\begin{proof}
The proof is very similar to the proof of the consistency of Martin's axiom, see Theorem 16.13 of \cite{Jech}.

Let $(P_\alpha: \alpha \leq \kappa),(\dot{Q}_\alpha: \alpha < \kappa)$ be a $<\theta$-support iteration, such that (viewing $P_\alpha$-names as $P_\beta$-names in the natural way, for $\alpha \leq \beta < \kappa$): 
\mn
\begin{itemize}
\item Each $P_\alpha$ forces that $\dot{Q}_\alpha$ has the $(<k, \theta,
  \theta)$-amalgamation property;
\sn
\item Whenever $\alpha < \kappa$, and $\dot{Q}$ is a $P_\alpha$-name such that $|\dot{Q}| < \kappa$ and $P_\alpha$ forces $\dot{Q}$ has the $(<k, \theta, \theta)$-amalgamation property, then there is some $\beta \geq \alpha$ such that $P_\beta$ forces that $\dot{Q}_\beta$ is isomorphic to $\dot{Q}$;
\item Each $|P_\alpha| \leq \kappa$.
\end{itemize}
\mn
This is possible by the $\theta^+$-c.c., and using Lemma~\ref{ObviousLemmasForcingAP}(2). The point is that at each stage $\alpha$, if $P_\alpha$ forces that $|\dot{Q}| = \lambda < \kappa$, then we can choose a $P_\alpha$-name $\dot{Q}'$ such that $P_\alpha$-forces $\dot{Q} \cong \dot{Q}'$ and that $\dot{Q}'$ has universe $\lambda$; then there are only $|P_\alpha|^{\theta \cdot \lambda} \leq \kappa$-many possibilities for $\dot{Q}'$, up to $P_\alpha$-equivalence. Thus we can eventually deal with all of them.

Note that by $\kappa = \kappa^{<\kappa}$ we have in particular that $P_\kappa$ forces $2^\theta \leq \kappa$. Once we verify $P_\kappa$ forces that Ax$(<k, \theta, \lambda)$ for all $\lambda < \kappa$, it follows that $P_\kappa$ forces $2^\theta = \kappa$.

By the $\theta^+$-c.c., we have that whenever $\dot{X}$ is a $P_\kappa$-name for a subset of $\lambda$ for some $\lambda< \kappa$, then for some $\alpha < \kappa$ and some $P_\alpha$-name $\dot{X}_\alpha$ we have that $\dot{X}$ is forced to be equal to $\dot{X}_\alpha$.

Let $\mathbb{V}[G_\kappa]$ be a $P_\kappa$-generic extension of $\mathbb{V}$; for $\alpha < \kappa$ let $G_\alpha$ be the associated $P_\alpha$-generic extension of $\mathbb{V}$. Rephrasing the previous paragraph, we have that whenever $\lambda < \kappa$ and $X \subseteq \lambda$ is in $\mathbb{V}[G_\kappa]$, we have $X \in \mathbb{V}[G_\alpha]$ for some $\alpha < \kappa$.

Let $Q$ be a forcing notion in $\mathbb{V}[G_\kappa]$  with the $(<k, \theta, \theta)$-amalgamation property, with $|Q| < \kappa$; let $F: (Q, Q_0) \to_k P_{X \theta \theta}$ witness this, where we can suppose $X = \lambda < \kappa$. Let $\mathcal{D} = \{D_\alpha: \alpha < \lambda'\}$ be a set of dense subsets of $Q$ where $\lambda' < \kappa$. By the preceding, we can find $\alpha < \kappa$ such that $(Q, Q_0, F, \mathcal{D}) \in \mathbb{V}[G_\alpha]$. Thus we can find $P_\alpha$-names for them, $\dot{Q}, \dot{Q}_0, \dot{F}, \dot{\mathcal{D}}$. We have that $P_\alpha$ forces $\dot{Q}$ has the $(<k, \theta, \theta)$-amalgamation property. Then we can find some $\beta \geq \alpha$ such that it is forced $\dot{Q} \cong \dot{Q}_\beta$. Then in $\mathbb{V}[G_\kappa]$, if we let $H$ be the $\mathbb{V}[G_\beta]$-generic subset of $Q$ added by $\dot{Q}_\beta$, then this is an ideal of $Q$ meeting each dense set in $\mathcal{D}$, thus verifying Ax$(<k, \theta)$.
\end{proof}

\noindent
We now relate this to model theory.
\begin{definition}
% \label{}
Suppose $(\theta, \lambda)$ is a nice pair, and $\theta \leq \mu \leq \lambda$, and $T$ is simple. Then say that $T$ has $(<k, \lambda, \mu, \theta)$-type amalgamation if whenever $M \models T$ has size $\leq \lambda$, and whenever $M_* \preceq M$ is countable, then $\Gamma^\theta_{M, M_*}$ has the $(<k, \mu, \theta)$-amalgamation property, or equivalently, $\Gamma^\theta_{M, M_*} \rightarrow_k^w P_{X \mu \theta}$ for some set $X$.
\end{definition}

\noindent

\begin{lemma}
\label{PresOfFail}
Suppose $T$ fails $(<k, \lambda, \mu, \theta)$-type amalgamation, and $P$ has the $(<k, \mu, \theta)$-amalgamation property. Then $P$ forces that $\check{T}$ fails $(<k, \lambda, |\mu|, \theta)$-type amalgamation.
\end{lemma}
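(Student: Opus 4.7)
The plan is a reflection argument: I will assume the property holds in $V[G]$ and push the witness down to $V$, leveraging the $(<k,\mu,\theta)$-amalgamation of $P$ as the essential glue. Let $M \models T$ of size $\leq \lambda$ and countable $M_* \preceq M$ (in $V$) witness the failure, so that $\Gamma := \Gamma^\theta_{M,M_*}$ fails $\Gamma \rightarrow_k^w P_{X\mu\theta}$ for every set $X$ in $V$. Since $P$ has the $(<k,\mu,\theta)$-amalgamation property, Lemma~\ref{ObviousLemmasForcingAP}(3) gives that $P$ is $<\theta$-distributive and hence $\theta$-closed; consequently no new partial types over $M$ of cardinality $<\theta$ are added in $V[G]$, and non-forking over $M_*$ and compatibility in $\Gamma$ (the union being again an element of $\Gamma$) are absolute between $V$ and $V[G]$. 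Thus $\Gamma^\theta_{M,M_*}$ is literally the same poset in $V$ and $V[G]$, and it suffices to show it still fails $(<k,\mu,\theta)$-amalgamation when viewed in $V[G]$.

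Suppose toward a contradiction that $V[G]$ carries some $F : \Gamma \rightarrow_k^w P_{X\mu\theta}$. Replacing $X$ by $|X|$ (a cardinal in $V[G]$, hence an ordinal, hence in $V$), we may assume $X \in V$; fix a $P$-name $\dot F$ for $F$. For each $p \in \Gamma$, since $\dot F(\check p)$ is forced to be a partial function from $X$ to $\mu$ of size $<\theta$, $<\theta$-distributivity makes the set of conditions deciding $\dot F(\check p)$ dense in $P$. Working in $V$, choose $r_p \in P$ and $f_p \in P_{X\mu\theta}$ with $r_p \Vdash \dot F(\check p) = \check f_p$. Let $F_P : P \rightarrow_k^w P_{X_P\mu\theta}$ witness the $(<k,\mu,\theta)$-amalgamation of $P$ in $V$, set $X^* = X \sqcup X_P$, and define $F^* : \Gamma \to P_{X^*\mu\theta}$ in $V$ by $F^*(p) = f_p \cup F_P(r_p)$, where each summand is viewed as a partial function on its piece of $X^*$.

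I would then verify that $F^*$ witnesses $\Gamma \rightarrow_k^w P_{X^*\mu\theta}$ in $V$, contradicting the assumed failure. Given $(p_i : i < i_*)$ with $i_* < k$ such that $(F^*(p_i))$ are compatible in $P_{X^*\mu\theta}$, the decomposition $X^* = X \sqcup X_P$ yields both $(f_{p_i})$ compatible in $P_{X\mu\theta}$ and $(F_P(r_{p_i}))$ compatible in $P_{X_P\mu\theta}$. The latter, via $F_P \rightarrow_k^w$, gives a common upper bound $r$ of the $r_{p_i}$'s in $P$. Then $r \Vdash \dot F(\check p_i) = \check f_{p_i}$ for all $i$; since the $f_{p_i}$'s are already compatible in $V$, $r$ forces $(\dot F(\check p_i) : i < i_*)$ compatible in $P_{X\mu\theta}$. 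Applying $F \rightarrow_k^w$ in $V[G]$, $r$ forces $(\check p_i)$ compatible in $\check\Gamma$, and by absoluteness $(p_i : i < i_*)$ is already compatible in $V$.

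The main subtlety is producing $F^*$ in $V$ from a name $\dot F$ that a priori lives only in $V[G]$: $<\theta$-distributivity of $P$ makes each individual value $\dot F(\check p)$ attainable in $V$ after selecting a deciding condition $r_p$, while the $(<k,\mu,\theta)$-amalgamation of $P$ itself, through $F_P$, supplies exactly the combinatorial glue needed for the chosen $r_p$'s, ensuring that compatibility of images on the $X_P$-component of $X^*$ forces compatibility of the $r_p$'s in $P$ and thus forces coherent agreement of the corresponding $f_p$'s.
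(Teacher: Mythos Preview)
Your proof is correct and follows essentially the same line as the paper's: pick conditions $r_p \in P$ deciding $\dot{F}(\check{p}) = \check{f}_p$, then define $F^*(p)$ by pairing $f_p$ with the image of $r_p$ under the amalgamation witness $F_P$ for $P$, on a disjoint union of index sets. One small slip: $<\theta$-distributivity does not imply $\theta$-closedness (the implication goes the other way), but $P$ is in fact $\theta$-closed directly from the definition of the $(<k,\mu,\theta)$-amalgamation property (least upper bounds for $<\theta$-chains), and in any case only distributivity is used in your argument.
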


\begin{proof}
It suffices to show that if $Q$ is a forcing notion and $P$ forces
that $\check{Q} \rightarrow_k^w \check{P}_{\check{X} \mu \theta}$,
then $Q \rightarrow_k^w P_{X' \mu \theta}$ for some $X'$, by
Lemma~\ref{ObviousLemmasForcingAP}(4). (We then apply this to $Q =
\Gamma^\theta_{M, M_*}$ 
witnessing the failure of $(<k, \mu, \theta)$-amalgamation.)

Choose some $F_*: (P, P_0) \to_k P_{X_* \mu \theta}$, and let $\dot{G}$ be a $P$-name so that $P$ forces $\dot{g}: \check{Q} \rightarrow_k^w P_{\check{X} \mu \theta}$. For every $q \in Q$, choose $\mathbf{p}(q) \in P_0$ such that $\mathbf{p}(q)$ decides $\dot{G}(\check{q})$, say $\mathbf{p}(q)$ forces that $\dot{G}(\check{q}) = f(q)$. Let $X$ be the disjoint union of $X_*$ and $X$, and choose $F: Q \to P_{X \mu \theta}$ so that if $F(q)$ and $F(q')$ are compatible, then $f(q)$ and $f(q')$ are compatible, and $F_*(\mathbf{p}(q))$ and $F_*(\mathbf{p}(q'))$ are compatible.

Suppose $(q_i: i < i_*)$ is a sequence from $Q$ with $(F(q_i): i < i_*)$ compatible in $P_{X \mu \theta}$. Then $(F_*(\mathbf{p}(q_i)): i < i_*)$ are all compatible in $P_{X_* \mu \theta}$, so $(\mathbf{p}(q_i): i < i_*)$ are compatible in $P_0$ with the least upper bound $p$. Then $p$ forces each $\dot{F}(\check{q}_i) = f(q_i)$. But also (by choice of $F$), $(f(q_i): i < i_*)$ are compatible in $P_{Y, \mu, \theta}$, so $p$ forces that $(\check{q}_i: i < i_*)$ is compatible in $\check{Q}$, i.e. $(q_i: i < i_*)$ is compatible in $Q$.
\end{proof}

\begin{theorem}
\label{SatPortionPt2}
Suppose $T$ simple, and $\aleph_0 < \theta = \theta^{<\theta} \leq \lambda = \lambda^{\aleph_0}$, and Ax$(<k, \theta)$ holds. Suppose $2^\theta > \lambda^{<\theta}$, and suppose $3 \leq k \leq \aleph_0$. Then the following are equivalent:
\mn
\begin{itemize}
\item[(A)] $T$ has $(<k, \lambda, \theta, \theta)$-type amalgamation;
\item[(B)] $SP^1_T(\lambda, \theta, \theta)$ holds.
\end{itemize}
\end{theorem}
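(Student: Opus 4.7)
My plan is to prove the two directions separately, with (B)$\Rightarrow$(A) being a direct translation and (A)$\Rightarrow$(B) requiring the forcing axiom. For (B)$\Rightarrow$(A), fix $M \models T$ of size $\le \lambda$ and countable $M_* \preceq M$, and let $(p_\gamma : \gamma < \theta)$ witness $SP^1_T(\lambda, \theta, \theta)$. Now $\mathcal{Q} := \Gamma^\theta_{M, M_*}$ is $\theta$-closed with least upper bounds via unions (non-forking in simple theories being finitary), so by Lemma~\ref{ObviousLemmasForcingAP}(4) it suffices to exhibit a weak witness $F : \mathcal{Q} \to P_{\{*\} \theta \theta}$. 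Define $F(p) = \{(*, \gamma_p)\}$ for any $\gamma_p < \theta$ with $p \subseteq p_{\gamma_p}$. If fewer than $k$ such $F$-images are compatible, the corresponding $\gamma_{p_j}$'s coincide at a single $\gamma$, whence $\bigcup_j p_j \subseteq p_\gamma$ is consistent and non-forking over $M_*$, giving a common upper bound in $\mathcal{Q}$.

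For (A)$\Rightarrow$(B), with $M, M_*, \mathcal{Q}$ as above, I will force in a single step to add both the family $(p_\gamma : \gamma < \theta)$ and a coloring $\mathcal{Q} \to \theta$ witnessing $SP^1_T(\lambda, \theta, \theta)$. Let $P$ be the poset of pairs $(\bar q, f)$, where $\bar q = (q_\gamma)_{\gamma \in u}$ with $u \in [\theta]^{<\theta}$ and $q_\gamma \in \mathcal{Q}$, and $f : w \to u$ with $w \in [\mathcal{Q}]^{<\theta}$ satisfies $p \subseteq q_{f(p)}$ for every $p \in w$, ordered by coordinatewise extension. $P$ is $\theta$-closed with least upper bounds by componentwise union. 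To verify $(<k, \theta, \theta)$-amalgamation, let $G_0 : \mathcal{Q} \to P_{X_0 \theta \theta}$ be the weak witness for $\mathcal{Q}$ from (A); on $X := (\theta \times X_0) \sqcup (\mathcal{Q} \times \{*\})$, define $F(\bar q, f)(\gamma, x) = G_0(q_\gamma)(x)$ and $F(\bar q, f)(p, *) = f(p)$. Compatibility of fewer than $k$ such $F$-images forces both coordinatewise compatibility of the $G_0(q^i_\gamma)$'s (yielding $q_\gamma := \bigcup_i q^i_\gamma \in \mathcal{Q}$ by the amalgamation of $\mathcal{Q}$) and agreement of the $f_i$'s on overlaps, so $\bigl(\bar q, \bigcup_i f_i\bigr)$ is a common upper bound in $P$.

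Since $|P| \le \lambda^{<\theta} < 2^\theta$, Ax$(k, \theta, \lambda^{<\theta})$ applies. Use the dense sets $D_p = \{(\bar q, f) : p \in \dom f\}$ for $p \in \mathcal{Q}$ (extend by adding a fresh $\gamma$ with $q_\gamma := p$ and $f(p) := \gamma$), $E_\gamma = \{(\bar q, f) : \gamma \in u\}$ for $\gamma < \theta$, and $D_{\gamma, \phi(x, \bar a)} = \{(\bar q, f) : \gamma \in u \text{ and } q_\gamma \text{ decides } \phi(x, \bar a)\}$ for $\gamma < \theta$ and $\phi(x, \bar a)$ over $M$ (density uses extension of a non-forking partial type to a complete non-forking type over $M$ to pick the appropriate sign); the total count is at most $\lambda^{<\theta}$. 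A generic ideal $G$ yields $p_\gamma := \bigcup\{q_\gamma : (\bar q, f) \in G, \gamma \in u\}$, each a complete type over $M$ not forking over $M_*$ (by finite character), while the union of the colorings ensures that every $p \in \mathcal{Q}$ lies in some $p_\gamma$.

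The main obstacle will be the verification of $(<k, \theta, \theta)$-amalgamation for the combined forcing $P$, since the coloring $f$ must not obstruct the type-amalgamation inherited from $\mathcal{Q}$; this is what forces $F(\bar q, f)$ to simultaneously encode both the $G_0$-images of the $q_\gamma$'s and the values of $f$ on $w$. Once this is set up, the forcing-axiom step is routine.
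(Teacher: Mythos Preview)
Your argument is correct in both directions. For (B)$\Rightarrow$(A) you spell out what the paper leaves as ``obvious,'' and your map $F(p)=\{(*,\gamma_p)\}$ into $P_{\{*\}\,\theta\,\theta}$ does the job cleanly (note that the common upper bound $\bigcup_j p_j$ is in fact the \emph{least} upper bound in $\Gamma^\theta_{M,M_*}$, so you even get $\rightarrow_k$ directly without invoking Lemma~\ref{ObviousLemmasForcingAP}(4)).

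For (A)$\Rightarrow$(B), however, you work harder than necessary. The paper simply takes $P$ to be the $<\!\theta$-support product of $\theta$ copies of $\mathcal Q=\Gamma^\theta_{M,M_*}$; this inherits the $(<k,\theta,\theta)$-amalgamation property from $\mathcal Q$ by the remark after Lemma~\ref{ObviousLemmasForcingAP} (products preserve it), and one only needs the dense sets $D_p=\{f: p\subseteq f(\gamma)\text{ for some }\gamma\in\dom f\}$. An ideal meeting these yields partial types $(p_\gamma:\gamma<\theta)$ covering $\mathcal Q$, and one then extends each $p_\gamma$ to a complete nonforking type as a post-processing step. Your richer forcing of pairs $(\bar q,f)$ builds the covering coloring and completeness of the $p_\gamma$ directly into the generic, at the cost of hand-verifying the amalgamation witness $F$ for the combined poset and introducing the extra dense families $E_\gamma$ and $D_{\gamma,\phi}$. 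Both routes are valid; the paper's buys simplicity by citing product-preservation, while yours is more self-contained and avoids the separate extension step at the end.
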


\begin{proof}
(B) implies (A): suppose (B) holds and $M \models T$ has size $\lambda$ and $M_* \preceq M$ is countable. Let $(p_\alpha(x): \alpha < \theta)$ be as in the definition of $SP^1_T(\lambda, \theta, \theta)$. Let $X = \{x\}$ be a singleton. Then $\Gamma^{\theta}_{M, M_*} \rightarrow^w_k P_{X \theta \theta}$, namely send $p(x) \in \Gamma^{\theta}_{M, M_*}$ to $\{(x, \alpha)\}$ for some $\alpha$ with $p(x) \subseteq p_\alpha(x)$.

(A) implies (B): let $M \models T$ have size at most $\lambda$ and let $M_* \preceq M$ be countable. Let $P$ be the $<\theta$-support product of $\theta$-many copies of $\Gamma^{\theta}_{M, M_*}$; then $P$ has the $(<k, \theta, \theta)$-amalgamation property and $|P| \leq \lambda^{<\theta}$. For each $p(x) \in \Gamma^\theta_{M, M_*}$ let $D_p$ be the dense subset of $P$ consisting of all $f \in P$ such that for some $\gamma \in \mbox{dom}(f)$, $f(\gamma)$ extends $p(x)$. By Ax$(<k,\lambda^{<\theta}, \theta)$ we can choose an ideal $I$ of $P$ meeting each $D_p$. This induces a sequence $(p_\gamma(x): \gamma < \theta)$ of  partial types over $M$ that do not fork over $M_*$, such that for all $p(x) \in \Gamma^\theta_{M, M_*}$ there is $\gamma < \theta$ with $p(x) \subseteq p_\gamma(x)$. To finish, extend each $p_\gamma(x)$ to a complete type over $M$ not forking over $M_*$.
\end{proof}
\newpage

\section{Non-Forking Diagrams}\label{IndSystems}
\bigskip

Suppose $T$ is a simple theory in a countable language. We wish to study various type amalgamation properties of $T$; in particular we will be looking at systems of types $(p_s(x): s \in P)$ over a system of models $(M_s: s \in P)$, for some $P \subseteq \mathcal{P}(I)$ closed under subsets. For this to be interesting, we need $(M_s: s \in P)$ to be independent in a suitable sense, which we define in this section.

The following definition is similar to the first author's definition of independence in \cite{ShelahIso} in the context of stable theories, see Section XII.2. In fact we are modeling our definition after Fact 2.5 there (we cannot take the definition exactly from \cite{ShelahIso} because we allow $P$ to contain infinite subsets of $I$).

\begin{definition}
Let $T$ be simple.

Suppose $I$ is an index set and $P \subseteq \mathcal{P}(I)$ is downward closed. Say that $(A_s: s \in P)$ is a diagram (of subsets of $\mathfrak{C}$) if each $A_s \subseteq \mathfrak{C}$ and $s \subseteq t$ implies $A_s \subseteq A_t$. Say that $(A_s: s \in P)$ is a non-forking diagram if for all $s_i: i < n, t_j: j < m \in P$, $\bigcup_{i < n} A_{s_i} \forkindep_{\bigcup_{i,j} A_{s_i \cap t_j}}\bigcup_{j < m} A_{t_j}$. Say that $(A_s: s \in P)$ is a continuous diagram if for every $X \subseteq P$, $\bigcap_{s \in X} A_s = A_{\bigcap X}$. (If $X$ is finite then this is a consequence of non-forking.)
\end{definition}

\noindent
Note that $(A_s: s \in P)$ is continuous if and only if for every $a \in \bigcup_{s \in P} A_s$, there is some least $s \in P$ with $a \in A_s$.  Also note that if $(A_s: s \in P)$ is non-forking (continuous) and $Q \subseteq P$ is downward closed then $(A_s: s \in Q)$ is non-forking (continuous).
\begin{lemma}
\label{EquivsForInd}
Suppose $(A_s: s \in P)$ is a 
diagram of subsets of $\mathfrak{C}$. Then the following are equivalent:
\mn
\begin{itemize}
\item[(A)] For all downward-closed subsets $S, T \subseteq P$, $\bigcup_{s \in S} A_s \forkindep_{\bigcup_{s \in S \cap T} A_s} \bigcup_{t \in T} A_t$.
\item[(B)] $(A_s: s \in P)$ is non-forking.
\end{itemize}
\end{lemma}

\begin{proof}
(A) implies (B) is trivial. 

(B) implies (A): we proceed by induction on $\kappa$ to show that for all $s_\alpha: \alpha < \kappa$, $t_\beta: \beta < \kappa$, $\bigcup_{\alpha} A_{s_\alpha} \forkindep_{\bigcup_{\alpha \beta} A_{s_\alpha \cap t_\beta}} \bigcup_\beta A_{t_\beta}$. This suffices to prove (A) since when $\kappa \geq |S| + |T|$ then $s_\alpha, t_\beta$ can just enumerate $S$ and $T$, in which case $s_\alpha \cap t_\beta$ enumerates $S \cap T$. For the induction, when $\kappa$ is finite use the hypothesis (B), and when $\kappa$ is infinite use the local character of nonforking.
\end{proof}

\noindent
The following lemma is similar to Lemma 2.3 from \cite{ShelahIso} Section XII.2.
\begin{lemma}
\label{SuffCondForInd}
Suppose $P \subseteq \mathcal{P}(I)$ is downward closed and $(A_s: s \in P)$ is a continuous diagram of subsets of $\mathfrak{C}$. Suppose there is a well-ordering $<_*$ of $\bigcup_s A_s$ such that for all $a \in \bigcup_s A_s$, $a$ is free from $\{b \in \bigcup_s A_s: b <_* a\}$ over $\{b \in s_a: b <_* a\}$, where $s_a$ is the least element of $P$ with $a \in A_{s_a}$. Then $(A_s: s \in P)$ is non-forking.
\end{lemma}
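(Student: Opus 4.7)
The plan is to establish the equivalent characterization \ref{EquivsForInd}(A): for every pair of downward closed $S, T \subseteq P$, the set $A := \bigcup_{s \in S} A_s$ is free from $B := \bigcup_{t \in T} A_t$ over $E := \bigcup_{s \in S \cap T} A_s$. Note continuity gives $A \cap B = E$: if $a \in A \cap B$, then the minimum $s_a \in P$ with $a \in A_{s_a}$ satisfies $s_a \subseteq s$ for some $s \in S$ and $s_a \subseteq t$ for some $t \in T$, so downward closure places $s_a \in S \cap T$. Enumerate $\bigcup_{s \in P} A_s$ in $<_*$-order as $(c_\alpha)_{\alpha<\kappa}$ and set $D_\alpha = \{c_\beta : \beta<\alpha\}$. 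I would prove by transfinite induction on $\alpha \leq \kappa$ the claim
$$(*)_\alpha: \quad A \cap D_\alpha \ind_{E \cap D_\alpha} B \cap D_\alpha,$$
from which $A \ind_E B$ follows by taking $\alpha = \kappa$.

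For the limit case, I would first apply finite character of non-forking on the left and right to reduce to finite $\bar{a} \subseteq A \cap D_\alpha$ and $\bar{b} \subseteq B \cap D_\alpha$, choose $\beta<\alpha$ with $\bar{a}\bar{b} \subseteq D_\beta$, and use the inductive hypothesis at each $\gamma \in [\beta, \alpha)$ to obtain $\bar{a} \ind_{E \cap D_\gamma} \bar{b}$. Passing to the union $E \cap D_\alpha = \bigcup_\gamma (E \cap D_\gamma)$ in the base uses a simple-theory fact: because forking equals dividing and any witness to a formula $\phi$ dividing over a set $C$ is also a witness to $\phi$ dividing over every subset $C' \subseteq C$, if $\bar{a} \ind_{C_\gamma} \bar{b}$ along an increasing chain with union $C$, then $\bar{a} \ind_C \bar{b}$.

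For the successor step at $\alpha+1$, set $c = c_\alpha$ and split into cases on whether $c \in A \setminus B$, $c \in B \setminus A$, $c \in E$, or $c \notin A \cup B$. If $c \notin A \cup B$ nothing changes. If $c \in A \setminus B$, downward closure of $S$ forces $s_c \in S$, hence $A_{s_c} \cap D_\alpha \subseteq A \cap D_\alpha$; base monotonicity applied to the hypothesis $c \ind_{A_{s_c} \cap D_\alpha} D_\alpha$ yields $c \ind_{A \cap D_\alpha} B \cap D_\alpha$, which combined with the inductive hypothesis via the chain rule gives $(A \cap D_\alpha) \cup \{c\} \ind_{E \cap D_\alpha} B \cap D_\alpha$. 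The case $c \in B \setminus A$ is symmetric. The delicate case is $c \in E$, where $c$ must be inserted into all three of $A$, $E$, $B$ simultaneously: here $s_c \in S \cap T$, so $A_{s_c} \subseteq E \subseteq B$, base monotonicity gives $c \ind_{B \cap D_\alpha} A \cap D_\alpha$, the chain rule and the symmetric inductive hypothesis yield $(B \cap D_\alpha) \cup \{c\} \ind_{E \cap D_\alpha} A \cap D_\alpha$, and a final base monotonicity moves $c$ from the right side into the base, producing the target $(A \cap D_\alpha) \cup \{c\} \ind_{(E \cap D_\alpha) \cup \{c\}} (B \cap D_\alpha) \cup \{c\}$. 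The main obstacles are the bookkeeping in this $c \in E$ subcase and the justification of the limit step, where simplicity of $T$ enters essentially; the remaining work is standard manipulation of the non-forking calculus.
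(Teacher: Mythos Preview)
Your proof is correct and takes essentially the same approach as the paper's: both proceed by transfinite induction along $<_*$, with a case split at successor steps according to where the new element falls relative to the two sides. The paper organizes the induction slightly differently---it carries the stronger hypothesis that the entire truncated diagram $(A_s \cap D_\alpha : s \in P)$ is non-forking (verifying condition (B) of Lemma~\ref{EquivsForInd} for all finite $(s_i),(t_j)$ simultaneously) rather than fixing $S, T$ in advance and verifying (A)---and note that your appeal to simplicity in the limit step is unnecessary, since a formula forking over a set always forks over every subset.
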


\begin{proof}
Let $(a_\alpha:\alpha < \alpha_*)$ be the $<_*$-increasing enumeration
of $\bigcup_s A_s$, and let $s_\alpha$ be the least element of $P$
with $a_\alpha \in A_{s_\alpha}$. For each $\alpha \leq \alpha_*$ and
for each $s \in P$ let $A_{s, \alpha} = A_s \cap \{a_\beta: \beta <
\alpha\}$. We show by induction on $\alpha$ that $(A_{s,\alpha}: s \in
P)$ is non-forking. 

Limit stages are clear. So suppose we have shown $(A_{s, \alpha}:s \in P)$ is non-forking. Let $(s_i: i < n), (t_j: j < m) \in P$ be given. We wish to show \\$\bigcup_{i < n} A_{s_i, \alpha+1} \forkindep_{\bigcup_{i < n, j < n} A_{s_i \cap t_j, \alpha+1}}\bigcup_{j < m}A_{t_j ,\alpha+1}$. Write $A = \bigcup_{i < n} A_{s_i, \alpha}$, write $B = \bigcup_{j < m} A_{t_j, \alpha}$, and write $C = \bigcup_{i,j} A_{s_i \cap t_j, \alpha}$. Define $A', B', C'$ similarly except with $\alpha+1$ replacing $\alpha$. We are trying to show $A' \forkindep_{C'} B'$, and by the inductive hypothesis, $A \forkindep_C B$, and we also know $a_\alpha \forkindep_{A_{s_\alpha, \alpha}} AB$. 

If $a_\alpha \not \in s_i$ and $a_\alpha \not \in t_j$ for any $i, j$ then $A' = A, B'=B, C' = C$ and so we are done. If $a_\alpha \in s_i \cap t_j$ and hence $A_{s_\alpha, \alpha} \subseteq C$, then $A' = A a_\alpha, B' = B a_\alpha, C' = C a_\alpha$ and $a_\alpha \forkindep_C AB$ (by monotonicity), so we are done by $A \forkindep_C B$ and transitivity.

Up to symmetry, the final case is $a_\alpha \in s_i$ but is not in any $t_j$. Then $A_{s_\alpha, \alpha} \subseteq A$ and $A' = A a_\alpha, B' = B, C' =C$. By monotonicity we have $a_\alpha \forkindep_A AB$, so $a_\alpha A \forkindep_A B$, so by transitivity $a_\alpha A \forkindep_C B$ as desired.
\end{proof}

\begin{theorem}
\label{IndependentSystemsThm}
Suppose $T$ is a simple theory in a countable language, and suppose $\mathbf{A}$ is a set of cardinality $\lambda$, where $\lambda = \lambda^{\aleph_0}$. Then we can find a continuous, non-forking diagram of models $(M_s: s \in [\lambda]^{\leq \aleph_0})$ such that $\mathbf{A} \subseteq \bigcup_s M_s$, and such that for all $S \subseteq \lambda$, $\bigcup_{s \in [S]^{\leq \aleph_0}} M_s$ has size at most $|S| \cdot \aleph_0$.
\end{theorem}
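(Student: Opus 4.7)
The plan is first to build a continuous non-forking diagram of \emph{subsets} of $\mathbf{A}$ indexed by $[\lambda]^{\leq\aleph_0}$, which contains the main combinatorial content, and then to upgrade to a diagram of models by a careful downward L\"owenheim--Skolem construction using the extension property for nonforking. Enumerate $\mathbf{A}=\{a_\alpha:\alpha<\lambda\}$. By transfinite induction on $\alpha<\lambda$, apply local character of nonforking for simple $T$ ($\kappa(T)\leq\aleph_1$) to choose a countable $J\subseteq\alpha$ such that $\tp(a_\alpha/\{a_\beta:\beta<\alpha\})$ does not fork over $\{a_\beta:\beta\in J\}$; close $J$ under the map $\beta\mapsto I_\beta$ (a countable union of countable sets is countable) to obtain an ancestor-closed countable $I_\alpha\subseteq\alpha$. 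Set $s_\alpha:=\{\alpha\}\cup I_\alpha\in[\lambda]^{\leq\aleph_0}$ and $A_s:=\{a_\alpha:s_\alpha\subseteq s\}$ for $s\in[\lambda]^{\leq\aleph_0}$.

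The subset diagram $(A_s)$ then satisfies all the required properties by direct inspection. Coverage is immediate, since $a_\alpha\in A_{s_\alpha}$. The size bound $|A_S|\leq|S|$ follows from $\alpha\in s_\alpha\subseteq S$. Continuity is built in, since $a_\alpha\in A_s\cap A_t$ iff $s_\alpha\subseteq s\cap t$ iff $a_\alpha\in A_{s\cap t}$. For non-forking, apply Lemma~\ref{SuffCondForInd} to the $\alpha$-well-ordering of $\mathbf{A}$: the set $\{a_\beta\in A_{s_\alpha}:\beta<\alpha\}$ equals $\{a_\beta:\beta\in I_\alpha\}$ (using ancestor-closure of $I_\alpha$), and by construction $a_\alpha$ is free from $\{a_\beta:\beta<\alpha\}$ over $\{a_\beta:\beta\in I_\alpha\}$, which is exactly the hypothesis of the lemma.

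To upgrade $(A_s)$ to a diagram of models, build $M_s\preceq\mathfrak{C}$ for each $s\in[\lambda]^{\leq\aleph_0}$ by transfinite induction on some well-ordering of $[\lambda]^{\leq\aleph_0}$ of length $\lambda$ (possible since $\lambda^{\aleph_0}=\lambda$): take $M_s$ to be a countable elementary submodel containing $A_s$ together with all previously-built $M_t$ for $t\subsetneq s$, selecting each new Skolem witness via the extension property of nonforking so that $M_s$ is free from the previously-built $M_{s'}$ (with $s'\not\subseteq s$) over the appropriate base. Coverage, monotonicity, and the size bound $|M_S|\leq|S|\cdot\aleph_0$ are preserved. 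Non-forking of $(M_s)$ follows from the inductive freeness condition, while continuity $M_s\cap M_t=M_{s\cap t}$ reduces to the simple-theoretic fact that two models which are nonforking over a common elementary submodel intersect in exactly that submodel. The main obstacle is maintaining coherence of the L\"owenheim--Skolem construction globally across all $s\in[\lambda]^{\leq\aleph_0}$, requiring bookkeeping to incorporate every proper subset into $M_s$ and to make the freeness requirements at each stage simultaneously satisfiable; the subset diagram $(A_s)$ provides the skeleton that makes this feasible.
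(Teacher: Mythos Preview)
Your construction of the subset diagram $(A_s)$ is correct and essentially identical to the paper's: your $s_\alpha$ is the paper's $\mathrm{cl}(\{\alpha\})$, and the appeal to Lemma~\ref{SuffCondForInd} is exactly right.

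The gap is in the upgrade to models. You propose to build $M_s$ by transfinite recursion along an arbitrary well-ordering of $[\lambda]^{\leq\aleph_0}$ of length $\lambda$, but this cannot secure monotonicity: the partial order $\subset$ on $[\lambda]^{\leq\aleph_0}$ is \emph{not} well-founded (e.g.\ $\omega\supsetneq\omega\setminus\{0\}\supsetneq\omega\setminus\{0,1\}\supsetneq\cdots$), so no well-ordering of it refines $\subset$. Hence when you reach $s$, there will be proper subsets $t\subsetneq s$ not yet processed, and you have no way to guarantee $M_t\subseteq M_s$ afterwards. There is a second, related problem: an infinite $s$ has $2^{\aleph_0}$ proper subsets, so even the ``previously-built $M_t$ with $t\subsetneq s$'' can be continuum many, and their union need not be countable. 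Your final paragraph flags ``maintaining coherence globally'' as the main obstacle, but the proposal does not contain an idea that resolves it.

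The paper's missing ingredient is this: rather than building a model for every $s\in[\lambda]^{\leq\aleph_0}$, one builds models only for the \emph{finitely generated} closed sets $\mathcal{A}_\lambda:=\{\mathrm{cl}(t):t\in[\lambda]^{<\omega}\}$. The point is that $(\mathcal{A}_\lambda,\subset)$ \emph{is} well-founded; the paper proves this by an induction on $\alpha$ showing that $\{\mathrm{cl}(t):t\in[\alpha]^{<\omega}\}$ is well-founded, using that passing from $\alpha$ to $\alpha+1$ amounts to adjoining the fixed countable set $\mathrm{cl}(\{\alpha\})\cap\alpha$. One then picks a well-order of $\mathcal{A}_\lambda$ refining $\subset$ and builds $M(A)$ for $A\in\mathcal{A}_\lambda$ by induction, taking $M(A)$ free from everything built so far over $A\cup\bigcup_{B\subsetneq A}M(B)$ (which is countable, since $A$ has only countably many closed subsets of the form $\mathrm{cl}(t)$ with $t$ finite). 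The diagram $(M_s)$ is then recovered from these. Well-foundedness of $\mathcal{A}_\lambda$ is the idea your sketch is missing.
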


\begin{proof}
Enumerate $\mathbf{A} = (a_\alpha: \alpha < \lambda)$. 

We define $(\mbox{cl}({\{\alpha\}}): \alpha < \lambda)$ inductively as follows, where each $\mbox{cl}({\{\alpha\}})$ is a countable subset $\alpha+1$ with $\alpha \in \mbox{cl}({\{\alpha\}})$. Suppose we have defined $(\mbox{cl}({\{\beta\}}): \beta < \alpha)$. Choose a countable set $\Gamma \subseteq \alpha$ such that $a_\alpha \forkindep_{\{a_\beta: \beta \in \Gamma\}}\{a_\beta: \beta < \alpha\}$; put $\mbox{cl}({\{\alpha\}})= \{\alpha\} \cup \bigcup_{\beta \in \Gamma} \mbox{cl}({\{\beta\}})$. 

Now, for each $s \subseteq \lambda$, let $\mbox{cl}(s) := \bigcup_{\alpha \in s} \mbox{cl}(\{\alpha\})$. Say that $A \subseteq \lambda$ is closed if $\mbox{cl}(A) = A$; this satisfies the usual properties of a set-theoretic closure operation, that is $\mbox{cl}(A) \supseteq A$, and $A \subseteq B$ implies $\mbox{cl}(A) \subseteq \mbox{cl}(B)$, and $\mbox{cl}^2(A) = \mbox{cl}(A)$, and $\mbox{cl}$ is finitary: in fact $\mbox{cl}(A) = \bigcup_{\alpha \in A} \mbox{cl}(\{\alpha\})$, which is even stronger. Finally, $|\mbox{cl}(A)| \leq |A| + \aleph_0$.

For each $s \in [\lambda]^{\leq \omega}$, let $A_s = \{a_\alpha: \alpha < \lambda \mbox{ and } \mbox{cl}(\{\alpha\}) \subseteq s\}$.  Since each $a_\alpha \in A_{\mbox{cl}(\{\alpha\})}$, we have $\bigcup_s A_s = \mathbf{A}$. Further, $(A_s: s \in [\lambda]^{\leq \omega})$ is clearly a continuous diagram of sets; we claim that $(A_s: s \in [\lambda]^{\leq \omega})$ is a non-forking diagram of sets. But this follows from Lemma~\ref{SuffCondForInd}, since each $a_\alpha \forkindep_{A_{\mbox{cl}(\{\alpha\})} \cap \{a_\beta: \beta < \alpha\}} \{a_\beta: \beta < \alpha\}$.

For each $\alpha \leq \lambda$, and each $u \in [\lambda]^{<\omega}$ let $\mathcal{A}_{\alpha, u} =
\{\mbox{cl}(s \cup u) \cap \alpha: s \in [\alpha]^{<\omega}\}$. We show by induction on
$\alpha \leq \lambda$ that for all $u \in [\lambda]^{<\omega}$, $(\mathcal{A}_{\alpha, u}, \subset)$ is
well-founded. There will be separate step case and limit case.

Suppose we have shown $(\mathcal{A}_{\alpha, u}, \subset)$ is well-founded. Then $\mathcal{A}_{\alpha+1, u} = X_0 \cup X_1$ where $X_0 = \{\mbox{cl}(s \cup u) \cap (\alpha+1): s \in [\alpha]^{<\omega}\}$ and $X_1 = \{\mbox{cl}(s \cup \{\alpha\} \cup u) \cap (\alpha+1): s \in [\alpha]^{<\omega}\}$. It suffices to show each of $X_0, X_1$ is well-founded under subset. Write $v_0 = u$ and $v_1 = u \cup \{\alpha\}$. Then $X_i = \{\mbox{cl}(s \cup v_i) \cap (\alpha+1): s \in [\alpha]^{<\omega}\}$. 

By the induction hypothesis, $\mathcal{A}_{\alpha, v_i}$ is well-founded under subset, so it suffices to show that $\mbox{cl}(s\cup v_i) \cap (\alpha+1) \subseteq \mbox{cl}(t \cup v_i) \cap (\alpha+1)$ if and only if $\mbox{cl}(s \cup v_i) \cap \alpha \subseteq \mbox{cl}(t \cup v_i) \cap \alpha$, for all $s, t \in [\alpha]^{<\omega}$. It suffices to show that $\alpha \in \mbox{cl}(s \cup v_i)$ if and only if $\alpha \in \mbox{cl}(t \cup v_i)$, but both are equivalent to $\alpha \in \mbox{cl}(v_i)$.

Now suppose we have shown $(\mathcal{A}_{\alpha, u}, \subseteq)$ is well-founded for all $u$ and for all $\alpha < \delta$ where $\delta$ is a limit. Suppose towards a contradiction $(\mathcal{A}_{\delta, u}, \subseteq)$ were not well-founded, say it had the infinite descending chain $\mbox{cl}(s_n \cup u) \cap \delta$. Choose $\alpha < \delta$ with $s_0 \in [\alpha]^{<\omega}$. Then we have each $s_n \subseteq \mbox{cl}(s_0 \cup u)$; thus $s_n \subseteq \alpha \cup \mbox{cl}(u)$. Put $t_n = s_n \cap \alpha$; then each $\mbox{cl}(s_n \cup u) \cap \delta = \mbox{cl}(t_n \cup u) \cap \delta$. But then $(\mbox{cl}(t_n \cup u) \cap \alpha: n < \omega)$ must be strictly descending, since $(\mbox{cl}(t_n \cup u) \cap \delta: n < \omega)$ is, and each $\mbox{cl}(t_n \cup u) \cap (\delta \backslash \alpha) = \mbox{cl}(u) \cap (\delta \backslash \alpha)$.

% ; note that $\mbox{cl}(X)
% = X$. Now suppose $s, t \in [\alpha]^{<\omega}$.  We claim that
% $\mbox{cl}(s \cup \{\alpha\}) \subseteq  \mbox{cl}(t \cup \{\alpha\})$
% iff $\mbox{cl}(s \cup X) \subseteq \mbox{cl}({t \cup X})$. But this is
% clear, since $\mbox{cl}(s \cup \{\alpha\}) = \mbox{cl}(s) \cup X \cup
% \{\alpha\}$, and $\mbox{cl}(t \cup \{\alpha\}) = \mbox{cl}(t) \cup X
% \cup \{\alpha\}$, and $\mbox{cl}(s \cup X) = \mbox{cl}(s) \cup X$, and 
% $\mbox{cl}(t \cup X) = \mbox{cl}(t) \cup X$.

% By the induction hypothesis, $(\mathcal{A}_\alpha, \subseteq)$ is well-founded. But then also $(\{\mbox{cl}(s \cup X): s \in [\alpha]^{<\aleph_0})$ is well-founded,

Hence $\mathcal{A} := \mathcal{A}_{\lambda, \emptyset} = \{\mbox{cl}(s): s \in [\lambda]^{<\aleph_0}\}$ is well-founded under subset. Note that for all $s \in \mathcal{A}$, since $\mbox{cl}(s) = s$ we have $A_s = \{a_\alpha: \alpha \in s\}$.

Let $<_*$ be a well-order of $\mathcal{A}$ refining $\subset$. Now by induction on $<_*$, choose countable models $(M_s: s \in \mathcal{A})$ so that $M_s \supseteq A_s$ and $M_s \supseteq M_t$ for $t \subseteq s$ and such that $M_s \forkindep_{A_s \cup \bigcup \{M_t: t \in \mathcal{A}, t \subset s\}}\mathbf{A} \cup \bigcup\{M_t:t  \in \mathcal{A}, t <_* s\}$. Finally, given $s \in [\lambda]^{\leq \omega}$, let $M_s := \bigcup\{M_t: t \in \mathcal{A}, t \subseteq s\}$. This is a continuous diagram of models, and for all $S \subseteq \lambda$, $\{t \in \mathcal{A}: t \subseteq S\}$ has size at most $|S|\cdot \aleph_0$, so to finish the proof of the theorem it suffices to show $(M_s: s \in [\lambda]^{\leq \aleph_0})$ is non-forking. 

Enumerate $\mathcal{A} = (u_\alpha: \alpha < \alpha_*)$ in $<_*$-increasing order. For each $\alpha \leq \alpha_*$ let $(B^\alpha_{ s}: s \in [\lambda]^{\leq \aleph_0})$ be the continuous diagram of models defined via $B^\alpha_{ s} = A_s \cup \bigcup\{M_{u_\beta}: u_\beta \subseteq s, \beta < \alpha\}$. So $B^0_{ s} = A_s$, $B^{\alpha_*}_{s} = M_s$, and it suffices to show by induction on $\alpha$ that $(B^{\alpha}_{ s}: s \in [\lambda]^{\leq \aleph_0})$ is nonforking.

The base case and limit cases are clear. So suppose $(B^{\alpha}_{s}: s \in [\lambda]^{\leq \aleph_0})$ is nonforking; we try to show $(B^{\alpha +1}_{s}: s \in [\lambda]^{\leq \aleph_0})$ is nonforking. Let $s_i: i < n, t_j: j < m$ be from $[\lambda]^{\leq \aleph_0}$; we want to show $\bigcup_{i < n} B^{\alpha+1}_{s_i}\forkindep_{\bigcup_{i, j} B^{\alpha+1}_{s_i \cap t_j}}\bigcup_{j < m} B^{\alpha+1}_{t_j}$.

Write $A = \bigcup_{i < n} B^\alpha_{s_i}$, write $B = \bigcup_{j < m} B^\alpha_{s_j}$, and write $C = \bigcup_{i, j} B^\alpha_{s_i \cap s_j}$, and let $A', B', C'$ be the same but with $\alpha+1$. We know $A \forkindep_C B$ by the inductive hypothesis and we are trying to show $A' \forkindep_{C'} B'$. We also know, by construction of $M_{u_\alpha}$, that $M_{u_\alpha} \forkindep_{B^\alpha_{u_\alpha}} \bigcup\{B^\alpha_s: s \in [\lambda]^{\leq \aleph_0}\}$.

If $u_\alpha$ is not contained in any $s_i$ or $t_j$ then $A= A', B= B', C= C'$ and we are done. If $u_\alpha$ is contained in some $s_i \cap t_j$ then $A' = A M_{u_\alpha}$, $B' = B M_{u_\alpha}$, $C' = C M_{u_\alpha}$, and $M_{u_\alpha} \forkindep_C AB$, so we are done by transitivity. The remaining case (up to symmetry) is that $u_\alpha$ is contained in some $s_i$ but not in any $t_j$. Then $A' = A M_{u_\alpha}$, $B' = B, C' = C$, and $M_{u_\alpha} \forkindep_{A} B$, so we are again done by transitivity.
\end{proof}
\newpage

\section{Amalgamation properties}\label{ModelTheory}
\bigskip

Suppose $T$ is a simple theory in a countable language. We now explain what we mean by $T$ having $<k$-type amalgamation.
\begin{definition}
\label{DefOfTypeAP}
Given $\Lambda \subseteq \,^n m$, let $P_{\Lambda}$ be the set of all partial functions from $n$ to $m$ which can be extended to an element of $\Lambda$; so $P_{\Lambda}$ is a downward-closed subset of $\mathcal{P}(n \times m)$, and $\Lambda$ is the set of maximal elements of $P_{\Lambda}$. 

Suppose $(M_u: u \subseteq n)$ is a non-forking diagram of models. Then by a $(\Lambda, \overline{M})$-array, we mean a  non-forking diagram of models $(N_s: s \in P_{\Lambda})$, together with maps $(\pi_s: s \in P_{\Lambda})$ such that each each $\pi_s: M_{\mbox{dom}(s)} \cong N_s$, and such that $s \subseteq t$ implies $\pi_s \subseteq \pi_t$. 
\end{definition}

\begin{definition}
% \label{}
Suppose $\Lambda \subseteq \,^n m$. Then $T$ has $\Lambda$-type amalgamation if, whenever $(M_u: u \subseteq n)$ is a  non-forking diagram of models, and whenever $p(x)$ is a complete type over $M_n$ in a single variable which does not fork over $M_0$, and whenever $(N_s, \pi_s: s \in P_{\Lambda})$ is a $(\Lambda, \overline{M})$-array, then $\bigcup_{\eta \in \Lambda} \pi_\eta(p(x))$ does not fork over $N_0$. 

Suppose $3 \leq k \leq \aleph_0$; then say that $T$ has $< k$-type amalgamation if whenever $|\Lambda| < k$, then $T$ has $\Lambda$-type amalgamation. 
\end{definition}

\noindent
In the definition of $\Lambda$-type amalgamation, it would not matter if we required each $M_u$ to be countable, by a downward Lowenheim-Skolem argument.

\begin{example}
% \label{}
Every simple theory has $< 3$-type amalgamation. 
\end{example}
\begin{proof}
Suppose $\Lambda \subseteq \,^n m$ has $|\Lambda| = 2$ and $(M_u: u \subseteq n)$ is a non-forking diagram of models and $p(x)$ is a complete type over $M_n$ in a single variable which does not fork over $M_0$. Suppose $(N_s, \pi_s: s \in P_\Lambda)$ is a $(\Lambda, \overline{M})$-array. Write $\Lambda = \{\eta_0, \eta_1\}$. Write $K_i = \pi_{\eta_i}[M_n]$ for $i < 2$ and let $q_i = \pi_{\eta_i}(p(x))$. By the independence theorem for simple theories, $q_0(x) \cup q_1(x)$ does not fork over $K_0 \cap K_1$. But $K_0 \cap K_1 \subseteq K_0$ and since $q_0(x)$ does not fork $N_0$, also $q_0(x) \cup q_1(x)$ does not fork over $N_0$ by transitivity.
\end{proof}

\begin{example}
$T_{rg}$ has $<\aleph_0$-type amalgamation.
\end{example}
\begin{proof}
This follows from the fact that if $(A_s: s \in P)$ is any nonforking diagram of sets and $p_s(x) \in S^1(A_s)$ for each $s \in P$, if each $p_s(x)$ does not fork over $A_0$ and if $p_s(x) \subseteq p_t(x)$ for $s \subseteq t$, then $\bigcup_s p_s(x)$ is consistent and does not fork over $A_0$.
\end{proof}

\begin{example}
% \label{}
Suppose $\ell > k \geq 2$. Let $T_{\ell, k}$ be the theory of the
generic $k$-ary, $\ell$-clique free hypergraph; these examples were
introduced by Hrushovski \cite{Hrush}, where he proved they have quantifier elimination, and 
$T_{\ell, k}$ is simple if and only if $k \geq 3$. For $k \geq 3$ and $A\subseteq B, p(x) \in S(B)$, we have that $p$ forks over $A$ if and only if $p$ is realized in $B \backslash A$.

Then: for $k \geq 3$, $T_{\ell, k}$ has $<k$-type 
amalgamation but not $<k+1$-type amalgamation.
\end{example}

\begin{proof}
Let $R$ denote the edge relation of $T_{\ell, k}$.

First we show $T_{\ell, k}$ has $<k$-type amalgamation. Suppose $\Lambda \subseteq \,^{n} m$ with $|\Lambda| < k$, and
$(M_u: u \subseteq n)$ are given, and suppose $p(x)$ is a
complete type over $M_{n}$. Suppose towards a contradiction there were
a $(\Lambda, \overline{M})$-array $(N_s, \pi_s: s \in P_{\Lambda})$
with $\bigcup_{\eta \in \Lambda} \pi_\eta[p(x)]$
forking over $N_0$. Then we must have created some
$\ell$-clique $(x, a_i:  < \ell-1),$ where each $a_i \in N_\eta$ for some $\eta \in \Lambda$. That is, $a_i: i < \ell-1$ is an $R$-clique and for each $u \in [\ell-1]^{k-1}$ there is some $\eta \in \Lambda$ such that $\pi_\eta[p(x)]$ implies $R(x, a_i: i \in u)$.

For each $i < \ell-1$, let $h[\{i\}]$ be the least $s \in P_{\Lambda}$ with $a_i \in N_s$. For $u \subseteq \ell-1$ let $h[u] = \bigcup_{i \in u} h[\{i\}]$. The following must hold:
\mn
\begin{itemize}
\item[(I)]  For every $u \in [\ell-1]^{k-1}$, $h[u] \in P_{\Lambda}$, as some $\pi_\eta(p(x))$ implies $R(x, a_i: i \in u)$, and any such $\eta$ contains $h[u]$;
\sn
\item[(II)] $h[\ell-1] \not \in P_{\Lambda}$, as if $h[\ell-1] \subseteq \eta$ then $\pi_\eta(p(x))$ would contain $R(x, a_i: i \in u)$ for all $u \in [\ell-1]^{k-1}$ and so would be inconsistent.
\end{itemize}
\mn
By (II), for each each $\eta \in \Lambda$ we must have $h[\ell-1] \not
\subseteq \eta$; thus we can choose $i_\eta < \ell-1$ such that
$h[\{i_\eta\}] \not \subseteq \eta$. Let $u = \{i_\eta: \eta \in \Lambda\}
\in [\ell-1]^{<k}$. 
Clearly then $h[u] \not \in P_{\Lambda}$, but this contradicts (I).

Now we show that $T_{\ell, k}$ fails $<k+1$-type amalgamation. Indeed, let $\Lambda \subseteq \,^k 2$ be the set of all $f: k \to 2$ for which there is exactly one $i < k$ with $f(i) = 1$; so $|\Lambda| = k$. Also, let $(M_u: u \subseteq k)$ be a  non-forking diagram of models so that there are $a_i \in M_{\{i\}}$ for $i < k$ and there are $b_j \in M_0$ for $n < \ell-k-1$, such that every $k$-tuple of distinct elements from $(a_i, b_j: i < k, j < \ell-k-1)$ is in $R$ except for $(a_i: i < k)$. Let $p(x)$ be the partial type over $M_k$ which asserts that $R(x, \overline{a})$ holds for every $k-1$-tuple $\overline{a}$ of distinct elements from $(a_i, b_j: i < k, j < \ell-k-1)$.

It is not hard to find a $(\Lambda, \overline{M})$-array $(N_s, \pi_s: s \in P_{\Lambda})$ such that, if we write $\pi_{\{(i, 0)\}}(a_i) = c_i$, then $R(c_i: i < k)$ holds; but now we are done, since $\bigcup_{f \in \Lambda} \pi_f[p(x)]$ is inconsistent. 
\end{proof}

\noindent
The following is the key consequence of $<k$-type amalgamation.
\begin{theorem}
\label{SatPortion}
Suppose $T$ is a simple theory with $<k$-type amalgamation. Then for all nice pairs $(\theta, \lambda)$, $T$ has $(<k, \lambda, \theta, \theta)$-type amalgamation.
\end{theorem}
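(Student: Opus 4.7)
The plan is to apply Theorem~\ref{IndependentSystemsThm} to the given $M$, working over $M_*$ (e.g., by naming $M_*$ by constants) so that all models in the resulting non-forking diagram contain $M_*$. This gives a continuous non-forking diagram of countable models $(M_s : s \in [\lambda]^{\leq \aleph_0})$ with $M \subseteq \bigcup_s M_s$; I will also require that each $M_s$ carries a canonical enumeration restricting coherently under $s \subseteq t$. By Lemma~\ref{muNice}, it suffices to exhibit $F : \Gamma^\theta_{M, M_*} \to P_{X \mu \theta}$ witnessing $(<k, \mu, \theta)$-amalgamation for $\mu := \theta^{<\theta}$; least upper bounds for $<\theta$-chains in $\Gamma^\theta_{M, M_*}$ are immediate from regularity of $\theta$ and finite character of forking, so only the construction of $F$ is nontrivial.

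For $p \in \Gamma^\theta_{M, M_*}$, continuity gives each parameter $a \in A_p$ a minimal $s_a$ with $a \in M_{s_a}$; set $I_p := \bigcup_{a \in A_p} s_a$ (so $|I_p| < \theta$ and $A_p \subseteq M_{I_p}$) and let $\rho_p : I_p \to \gamma_p$ be the order-type enumeration. Define the template $w_p$ to be the isomorphism class over $M_*$ of the reindexed enumerated structure: the sub-diagram $(M_{\rho_p^{-1}[u]} : u \in [\gamma_p]^{\leq \aleph_0})$ with its canonical enumerations, the sequence $A_p$ relabeled via $\rho_p$, and the partial type $p$ rewritten over these labels. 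There are at most $2^{<\theta} \leq \mu$ templates; fix an injection $\xi$ of templates into $\mu$, and with $X := \lambda \sqcup \{*\}$ define $F(p) \in P_{X \mu \theta}$ by $F(p)(\alpha) := \rho_p(\alpha)$ for $\alpha \in I_p$ and $F(p)(*) := \xi(w_p)$.

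Suppose now $(F(p_i) : i < i_*)$ are compatible with $i_* < k$. The matched $*$-values force a common template $w^*$; fixing a representative $(\overline{M}^*; A^*; p^*)$, the canonical enumerations yield canonical isomorphisms $\sigma_i : (\overline{M}^*; A^*; p^*) \to (\overline{M}_{I_{p_i}}; A_{p_i}; p_i)$ over $M_*$. Compatibility on $\lambda$ forces the embeddings $\eta_i := \rho_{p_i}^{-1} : \gamma^* \to \lambda$ to agree on preimages of common images, so $\Lambda := \{\eta_i : i < i_*\} \subseteq \,^{\gamma^*} \lambda$ is well-defined with $|\Lambda| < k$ and $V_s := s[\dom(s)]$ is well-defined for each $s \in P_\Lambda$. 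Setting $N_s := M_{V_s}$ and $\pi_s := \sigma_i \restriction_{M^*_{\dom(s)}}$ (independent of the choice of $i$ with $s \subseteq \eta_i$, by coherence of the canonical enumerations) produces a $(\Lambda, \overline{M}^*)$-array with $\pi_{\eta_i}(p^*) = p_i$ and non-forking inherited from $(M_s)$. The $<k$-type amalgamation hypothesis then delivers $\bigcup_i p_i = \bigcup_i \pi_{\eta_i}(p^*)$ not forking over $N_\emptyset \supseteq M_*$, giving the required upper bound in $\Gamma^\theta_{M, M_*}$.

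The main technical obstacle is organizing the canonical enumerations of the $(M_s)$ so that a shared template $w^*$ genuinely produces coherent isomorphisms $\sigma_i$ agreeing on every overlapping sub-diagram, making $\pi_s$ unambiguous. This requires a mild enrichment of Theorem~\ref{IndependentSystemsThm}'s construction with a rigid master enumeration restricting nicely to sub-diagrams --- essentially careful bookkeeping, but the one piece that must be done with attention to coherence across the array.
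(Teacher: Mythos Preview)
Your strategy is the paper's: use Theorem~\ref{IndependentSystemsThm} to cover $M$ by a continuous non-forking diagram, encode each $p$ by its index set together with the isomorphism type of the induced sub-diagram and of the type over it, and read compatibility of the $F$-images as saying the $p_i$ are images of a single template type under a $(\Lambda,\overline{M})$-array, so that $<k$-type amalgamation yields the common non-forking extension. The paper implements your ``canonical enumerations'' simply by fixing one global well-ordering $<_*$ of $\bigcup_s M_s$ and recording enough in $F$ that the unique $<_*$-preserving bijection between compatible pieces is an isomorphism respecting the sub-diagram structure; the coherence you flag as the main obstacle is then automatic. (The paper also first passes to the dense set of \emph{complete} types over some $M_s$, rather than encoding arbitrary partial types as you do; either works, by Lemma~\ref{AmalgEquiv}(B).)

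The one substantive step you omit is the reduction to \emph{finite} $n,m$ before invoking $<k$-type amalgamation. Your $\Lambda = \{\eta_i : i < i_*\}$ lives in ${}^{\gamma^*}\lambda$ with $\gamma^*$ an arbitrary ordinal below $\theta$, but the amalgamation property is formulated for $\Lambda \subseteq {}^n m$ with $n,m$ natural numbers (and is only verified in that form for the examples $T_{\ell,k}$). The paper bridges this with a short combinatorial reduction: on $\gamma^*$ set $\gamma \mathrel{E} \gamma'$ iff $\{(i,i') : \eta_i(\gamma) = \eta_{i'}(\gamma)\} = \{(i,i') : \eta_i(\gamma') = \eta_{i'}(\gamma')\}$; since $i_* < k$ is finite, $E$ has finitely many classes $E_0,\dots,E_{n-1}$, and for each $j$ the blocks $\{\eta_i[E_j] : i < i_*\}$ are pairwise equal-or-disjoint, giving $m_j \leq i_*$ values. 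Sending $i$ to the function $j \mapsto (\text{index of }\eta_i[E_j])$ yields $\Lambda' \subseteq {}^n m$ with $n,m$ finite and a $(\Lambda',\overline{M}')$-array carrying exactly the same types. Without this step your final appeal to the hypothesis is not literally licensed.
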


\begin{proof}
By Theorem~\ref{IndependentSystemsThm}, it suffices to show that if $(\mathbf{M}_s: s \in [\lambda]^{< \theta})$ is a continuous non-forking diagram of countable models such that each $|\mathbf{M}_s| < \theta$, then writing $\mathbf{M}= \bigcup_s \mathbf{M}_s$, we have that $\Gamma^\theta_{\mathbf{M}, \mathbf{M}_0} \rightarrow_k^w P_{X \theta \theta}$ for some $X$. Let $<_*$ be a well-ordering of $\mathbf{M}$.

Given $A \in [\mathbf{M}]^{<\theta}$ let $s_A$ be the $\subseteq$-minimal $s \in [\lambda]^{<\theta}$ with $A \subseteq M_{s_A}$, possible by continuity.

Let $P$ be the set of all $p(x) \in \Gamma^{\theta}_{\mathbf{M}, \mathbf{M}_0}$ such that for some $s \in [\lambda]^{<\theta}$, $p(x)$ is a complete type over $\mathbf{M}_{s}$; we write $p(x, \mathbf{M}_s)$ to indicate this. $P$ is dense in $\Gamma^{\theta}_{\mathbf{M}, \mathbf{M}_0}$, so it suffices to show that $P \rightarrow_k^w P_{\lambda \theta \theta}$ for some $\lambda$.

\begin{claim} For some set $X$, we can find $F: P \to P_{X \theta \theta}$ so that if $F(p(x, \mathbf{M}_s))$ is compatible with $F(q(x, \mathbf{M}_t))$, then:
\mn
\begin{itemize}
\item $s$ and $t$ have the same order-type, and if we let $\rho: s \to t$ be the unique order-preserving bijection, then $\rho$ is the identity on $s\cap t$;
\item $\mathbf{M}_s$ and $\mathbf{M}_t$ have the same
  $<_*$-order-type, and the unique $<_*$-preserving bijection from
  $\mathbf{M}_s$ to $\mathbf{M}_t$ is in fact an isomorphism $\tau:
  \mathbf{M}_s \cong \mathbf{M}_t$ which is the identity on $\mathbf{M}_{s \cap t}$;
\sn
\item For each finite $\overline{a} \in \mathbf{M}_{s}^{<\omega}$, if we write $s' = s_{\overline{a}}$ and if we write $t' = s_{\tau(\overline{a})}$, then: $\rho[s'] = t'$ and $\tau \restriction_{\mathbf{M}_{s'}}: \mathbf{M}_{s'} \cong \mathbf{M}_{t'}$.
\sn
\item $\tau[p(x)] = q(x)$.
\end{itemize}
\end{claim}
\begin{proof}
Given $p(x, \mathbf{M}_s) \in P$ let $\mathcal{D}_p$ consist of the following data: 
\begin{itemize}
\item The order-type of $s$, call it $\gamma$; let $\rho_0: \gamma \to s$ be the order-preserving bijection
\item The $<_*$-order-type of $\mathbf{M}_s$, call it $\delta$; let $\tau_0: \delta \to \mathbf{M}_s$ be the order-preserving bijection;
\item The structure $N$ with universe $\delta$, such that $\tau_0$ is an isomorphism from $N$ to $\mathbf{M}_s$;
\item $\tau_0^{-1}(p(x))$;
\item The set of all $(\overline{a}, \alpha)$ such that $\overline{a} \in N$ and $\alpha < \gamma$ and $\rho_0(\alpha) \in s_{\tau_0(\overline{a})}$.
\end{itemize}

Note that there are only $2^{<\theta}$ possibilities for $\mathcal{D}_p$. Thus, letting $X_0$ be any set of cardinality $\theta$, by choosing an antichain in $P_{X_0 \theta \theta}$ of size $2^{<\theta}$, we can find $F_0: P \to P_{X_0 \theta \theta}$ so that for all $p, q \in P$, if $F_0(p)$ and $F_0(q)$ are compatible then $\mathcal{D}_p = \mathcal{D}_q$.

Let $F_1: P \to P_{\lambda \theta \theta}$ send $p(x, \mathbf{M}_s)$ to the function with domain $s$ sending $\alpha \in s$ to its order-type in $s$. Let $F_2: P \to P_{\mathbf{M} \theta \theta}$ send $p(x, \mathbf{M}_s)$ to the function with domain $\mathbf{M}_s$ sending $a \in \mathbf{M}_s$ to its $<_*$-order-type in $\mathbf{M}_s$. Let $X = X_0 \cup \lambda \cup \mathbf{M}$ and let $F(p) = F_0(p) \cup F_1(p) \cup F_2(p)$. A straightforward verification shows that this works.
\end{proof}

Fix $F, X$ as in the claim. Note that it follows that for every $s' \subseteq s$, $\rho \restriction_{\mathbf{M}_{s'}}: \mathbf{M}_{s'} \cong \mathbf{M}_{\rho[s']}$, since $\mathbf{M}_{s'} = \bigcup\{\mathbf{M}_{s_{\overline{a}}}: \overline{a} \in (\mathbf{M}_{s'})^{<\omega}\}$ and similarly for $\mathbf{M}_{t'}$.

We claim that $F$ works. So suppose $p_i(x, \mathbf{M}_{s_i}): i < i_*$ is a sequence from $P$ for $i_* < k$, such that $(F(p_i(x)): i < i_*)$ is compatible in $P_{x \theta \theta}$.

Let $\gamma_*$ be the order-type of some or any $s_i$. Enumerate $s_i = \{s_i(\gamma): \gamma < \gamma_*\}$ in increasing order, and for $u \subseteq \gamma_*$ let $s_i[u] = \{s_i(\gamma): \gamma \in u\}$. Let $E$ be the equivalence relation on $\gamma_*$ defined by: $\gamma E \gamma'$ iff for all $i, i' < k$, $s_i(\gamma) = s_{i'}(\gamma)$ iff $s_i(\gamma') = s_{i'}(\gamma')$. Then $E$ has finitely many classes; enumerate them as $(u_j: j < n)$.  Then $s_i$ is the disjoint union of $s_i[u_j]$ for $j < n$. Moreover, $s_i[u_j] \cap s_{i'}[u_{j'}] = \emptyset$ unless $j = j'$; and if $s_{i}[u_j] \cap s_{i'}[u_j] \not= \emptyset$ then $s_i[u_j] = s_{i'}[u_j]$. For each $j < n$, enumerate $\{s_i[u_j]: i < i_*\} = (Y_{\ell, j}: \ell < m_j) $ without repetitions. Let $m = \max(m_j: j < n)$; and for each $i <i_*$, define $\eta_i \in \,^n m$ via: $\eta_i(j) =$ the unique $\ell < m_i$ with $s_{i}[u_j] = Y_{\ell, j}$. 

Let $\Lambda = \{\eta_i: i < i_*\}$. For each $s \in P_{\Lambda}$, let $N_{s} = \mathbf{M}_{t_s}$ where $t_s = \bigcup_{(j, \ell) \in s} Y_{\ell, j}$. Also,  define $(M_u: u \subseteq n) := (N_{\eta_0 \restriction_u}: u \subseteq n)$. Then the hypotheses on $F$ give commuting isomorphisms $\pi_s: M_{\mbox{dom}(s)} \cong N_s$ for each $s \in P_{\Lambda}$, in such a way that $(\overline{N}, \overline{\pi})$ is a $(\lambda, \overline{M})$-array, and each $\pi_{\eta_i}(p_0(x)) = p_i(x)$. It follows by hypothesis on $T$ that $\bigcup_{i < i_*} p_i(x)$ does not fork over $N_0$, as desired.

\end{proof}

\begin{corollary}
\label{MinClassSuff}
Suppose $T$ is simple, with $<\aleph_0$-type amalgamation.
\mn
\begin{itemize}
\item[(A)] Suppose $\theta$ is a regular uncountable cardinal. Then for any $M \models T$ and any $M_0 \preceq M$ countable, $\Gamma^{\theta}_{M, M_0}$ has the $(<\aleph_0, \theta, \theta)$-amalgamation property.
\item[(B)]Suppose $(\theta, \lambda)$ is a nice pair, and suppose that $\theta \leq \mu \leq \lambda$ satisfies $\mu = \mu^{<\theta}$ and $2^\mu \geq \lambda$. Then $SP^1_T(\lambda, \mu, \theta)$ holds.
\item[(C)] If the singular cardinals hypothesis holds, then $T \leq_{SP} T_{rg}$.
\end{itemize}
\end{corollary}

\begin{proof}
(A) follows immediately from Theorem~\ref{SatPortion}. 

(B): Suppose $M \models T$ has $|M| \leq \lambda$, and suppose $M_0 \preceq M$ is countable. Choose some $F: \Gamma^\theta_{M, M_0} \to_{\aleph_0}^w P_{\lambda \theta \theta}$. By Corollary~\ref{ER}, we can find $(\mathbf{f}_\gamma: \gamma < \mu)$ such that whenever $f \in P_{\lambda \theta \theta}$ then $f \subseteq \mathbf{f}_\gamma$ for some $\gamma < \mu$; for each $\gamma < \mu$, choose $q_\gamma(x)$, a complete type over $M$ not forking over $M_0$, and extending $\bigcup\{p(x): F(p(x)) \subseteq \mathbf{f}_\gamma\}$. Then clearly $(q_\gamma(x): \gamma < \mu)$ witnesses $SP^1_T(\lambda, \mu, \theta)$.

(C): Suppose $(\theta, \lambda)$ is a nice pair, and $SP_{T_{rg}}(\lambda, \theta)$ holds; we want to show $SP_T(\lambda, \theta)$ holds. We can suppose $\lambda < \lambda^{<\theta}$. Then by Theorem~\ref{SPVariants}(B), $SP^1_{T_{rg}}(\lambda, \lambda, \theta)$ holds. By Theorem~\ref{SCHTheorem}, there is $\theta \leq \mu < \lambda$ with $\mu = \mu^{<\theta}$ and $2^\mu \geq \lambda$. By (B), $SP^1_T(\lambda, \mu, \theta)$ holds, so by Theorem~\ref{SPVariants}(A), $SP_T(\lambda, \theta)$ holds, as desired.
\end{proof}
\newpage

\section{Conclusion}\label{Conclusion}
\bigskip

We begin to put everything together. We aim to produce a forcing extension in which, whenever $T$ has $<k$-type amalgamation, then $T_{k, k-1} \not \leq_{SP} T$. We will choose in advance nice pairs $(\theta_k, \lambda_k)$ to witness this. In order to arrange that $SP_T(\lambda_k, \theta_k)$ holds we will use Theorems~\ref{SatPortionPt2} and \ref{SatPortion}. To arrange that $SP_{T_{k, k-1}}(\lambda_k, \theta_k)$ fails, we will use the following.
\begin{theorem}
\label{NonSatPortion}
Suppose $(\theta, \lambda)$ is a nice pair such that $\theta = \theta^{<\theta}$ and $\lambda > \theta$ is a limit cardinal.  Let $3 \leq k < \omega$. Then $P_{\lambda \theta \theta}$ forces that for all $\mu < \lambda$, $\check{T}_{k+1, k}$ fails $(<k+1, \lambda, \mu, \theta)$-type amalgamation.
\end{theorem}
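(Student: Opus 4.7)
The plan is to apply Lemma~\ref{PresOfFail}. First observe that $P_{\lambda\theta\theta}$ has the $(<k+1,\mu,\theta)$-amalgamation property for every cardinal $\mu\ge\theta$: it is $\theta$-closed with least upper bounds of ascending $<\theta$-chains (their unions, of size $<\theta$ by regularity of $\theta$), and since $\theta\le\mu$ the inclusion $P_{\lambda\theta\theta}\hookrightarrow P_{\lambda\mu\theta}$ witnesses $P_{\lambda\theta\theta}\to_{k+1}P_{\lambda\mu\theta}$: any $<k+1\le\theta$ conditions of $P_{\lambda\theta\theta}$ compatible in $P_{\lambda\mu\theta}$ have union a partial function $\lambda\to\theta$ of size $<\theta$, giving the required least upper bound in $P_{\lambda\theta\theta}$. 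By Lemma~\ref{PresOfFail} it thus suffices to show in the ground model: for each $\mu<\lambda$, $T_{k+1,k}$ fails $(<k+1,\lambda,\mu,\theta)$-type amalgamation.

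Fix $\mu<\lambda$ and let $\nu:=\mu^+\le\lambda$ (using that $\lambda$ is a limit). Build $M\models T_{k+1,k}$ of size $\nu$ containing $\nu$-many pairwise disjoint $k$-tuples $E_\alpha=(a^\alpha_0,\ldots,a^\alpha_{k-1})$ for $\alpha<\nu$ with $R(a^\alpha_0,\ldots,a^\alpha_{k-1})$ holding, and pick a countable $M_*\preceq M$ over which each partial type $p_{\alpha,j}(x):=\{R(x,a^\alpha_0,\ldots,\hat{a^\alpha_j},\ldots,a^\alpha_{k-1})\}$ does not fork, so $p_{\alpha,j}\in\Gamma^\theta_{M,M_*}$. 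For every $\alpha<\nu$, $\bigcup_{j<k}p_{\alpha,j}$ together with the edge on $E_\alpha$ creates a $(k+1)$-clique on $\{x\}\cup E_\alpha$; hence $(p_{\alpha,j}:j<k)$ is source-incompatible. Suppose for contradiction some $F:\Gamma^\theta_{M,M_*}\to P_{X\mu\theta}$ witnesses $\to_{k+1}^w$. Then for every $\alpha<\nu$ the tuple $(F(p_{\alpha,j}):j<k)$ is target-incompatible, so one picks $j_\alpha<j'_\alpha<k$, $x_\alpha\in X$, and distinct $u_\alpha,v_\alpha\in\mu$ with $F(p_{\alpha,j_\alpha})(x_\alpha)=u_\alpha$ and $F(p_{\alpha,j'_\alpha})(x_\alpha)=v_\alpha$. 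Since the quadruple $(j_\alpha,j'_\alpha,u_\alpha,v_\alpha)$ ranges in a set of size at most $\binom{k}{2}\cdot\mu^2=\mu<\nu$ (assuming $\mu$ infinite), pigeonhole on the regular cardinal $\nu$ yields a cofinal $S\subseteq\nu$ of size $\nu$ on which the quadruple is constantly $(j_*,j'_*,u_*,v_*)$.

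The main obstacle is the final step: leveraging the homogeneity on $S$ to produce a source-incompatible but target-compatible $i_*$-tuple (with $i_*\le k$). The intended argument considers source-compatible cross-$\alpha$ amalgams $q_f:=\bigcup_{\alpha\in T}p_{\alpha,f(\alpha)}$ for $T\subseteq S$ of size $<\theta$ and $f:T\to k$, which lie in $\Gamma^\theta_{M,M_*}$ via a non-forking diagram of models as in Theorem~\ref{IndependentSystemsThm}, together with the constraints that target-incompatibility of the source-incompatible tuples involving these amalgams places on the $F$-images. Combined with the forced constancy of $(u_*,v_*)$ at the coordinates $x_\alpha$ for $\alpha\in S$, these constraints must eventually compel $F$ to target-compatibilize some source-incompatible configuration---mirroring, at the $\nu$-indexed level, the original minimal failure of $<k+1$-type amalgamation of $T_{k+1,k}$ exhibited in Section~\ref{ModelTheory}. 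The cardinal inequality $\nu>\mu$, which rests on $\lambda$ being a limit cardinal above $\mu$, is exactly what makes this scaled pigeonhole go through.
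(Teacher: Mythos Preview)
The reduction via Lemma~\ref{PresOfFail} is a reasonable opening move, but the ground-model argument you sketch does not go through, and indeed your own phrasing (``The main obstacle is the final step'') signals that you have not completed it. The gap is not a missing detail: the model $M$ you build, with pairwise \emph{disjoint} $k$-edges $E_\alpha$, is too tame to witness failure of $(<k{+}1,\lambda,\mu,\theta)$-type amalgamation. For the types you actually use, define $F$ into $P_{\nu\, k\,\theta}\subseteq P_{\nu\mu\theta}$ by setting $F(p_{\alpha,j})=\{(\alpha,j)\}$; because the edges are disjoint, the only source-incompatible $\le k$-tuples among the $p_{\alpha,j}$ (or their unions $q_f$) are those covering all $k$ positions at a single $\alpha$, and this $F$ separates every such tuple. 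Your pigeonhole stabilises the pattern $(j_*,j'_*,u_*,v_*)$ but not the coordinate $x_\alpha$, so distinct $\alpha\in S$ live on disjoint domains of $F$ and never interact; the cross-$\alpha$ amalgams you propose are all source-compatible with one another and add no new constraints.

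The paper's proof does not reduce to the ground model at all: the forcing is used essentially. One identifies $P_{\lambda\theta\theta}$ with $P_{[\lambda]^k\theta\theta}$ and lets the \emph{generic} $R\subseteq[\lambda]^k$ determine an edge relation on $k\times\lambda$ many vertices. A putative witness $F$ in the extension is pulled back to names; for each $v\in[\lambda]^k$ a condition $p_v$ deciding $\dot F(v)$ is chosen, and then a free-set theorem (Theorem~46.1 of \cite{Combinatorics}, using that $\lambda$ is a limit above $\mu$) is applied to the set-map $u\mapsto S(u)$ coming from the $\mu^+$-c.c.\ of the Boolean completion, producing $v\in[\lambda]^k$ with $S(u)\cap v=u$ for all $u\in[v]^{k-1}$. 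Freeness ensures $v\notin\mbox{dom}\bigl(\bigcup_u p_{v_u}\bigr)$, so one may extend to a condition forcing $v\in R$, creating a $(k{+}1)$-clique and contradicting the compatibility that $F$ was supposed to guarantee. The point is that the edge status of $v$ is decided \emph{after} the free set is located; no fixed ground-model hypergraph with the product-like structure you describe can play this role.
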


\begin{proof}
Fix $\theta \leq \mu < \lambda$, and write $P = P_{[\lambda]^k \theta \theta}$. We show that $P$ forces $\check{T}_{k+1, k}$ fails $(<k+1, \lambda, \mu, \theta)$-type amalgamation. Since $P \cong P_{\lambda \theta \theta}$, this suffices.

We  pass to a $P$-generic forcing extension $\mathbb{V}[G]$ of
$\mathbb{V}$. Let $R \subseteq [\lambda]^k$ be the set of all $v$ with
$\{(v, 0)\} \in G$. Choose $M_0 \preceq M \models T_{k+1, k}$, and
$(a_{i, \alpha}: i < k, \alpha < \lambda)$ such that, writing
${\overline{a}}_s = \{a_{i,\alpha}:(i,\alpha) \in s\}$ for 
$s \subseteq k \times \lambda$:
\mn
\begin{itemize}
\item $M_0$ is countable, and $|M| \leq \lambda$ and each $a_{i,
    \alpha} \in M \backslash M_0$;
\sn
\item $a_{i, \alpha} = a_{j, \beta}$ iff $\alpha = \beta$ and $i = j$
\sn
\item For every $v_* \in [k \times \lambda]^k$, if $v_*$ is not the graph of the increasing enumeration of some $v \in [\lambda]^k$, then $R^M({\overline{a}}_{v_*})$ fails. Otherwise, $R^M({\overline{a}}_{v_*})$ holds if and only if $v \in R$.
\end{itemize}

For each $v \in [\lambda]^{k}$, let $\phi_v(x, {\overline{a}}_{k
  \times v})$ be the formula that asserts that $R(x,
{\overline{a}}_{u})$ holds for each $u \in [k \times v]^{k-1}$. 
Note that 
$\phi_v(x, \overline{a}_{k \times v})$ is consistent exactly when $v \not \in R$.

It suffices to show that there is no cardinal $\lambda'$ and function $F_0: \Gamma^\theta_{M, M_0} \to^w_{k+1} P_{\lambda' \mu \theta}$; so suppose towards a contradiction some such $F_0$ existed. Then we can find $F: [\lambda]^{k}\backslash R \to  P_{\lambda' \mu \theta}$ such that for all sequences $(w_i: i < k+1)$ from $[\lambda]^k \backslash R$, if $(F(w_i): i < k)$ is compatible in $P$ then $\bigwedge_{i < k} \phi_{w_i}(x, \overline{a}_{k \times w_i})$ is consistent. This is all we will need, and so we can replace $\lambda'$ by $\lambda$ (since $|[\lambda]^k| = \lambda$). 

Pulling back to $\mathbb{V}$, we can find $p_* \in P$, and $P$-names $\dot{R}, \dot{M}, \dot{M}_0, \dot{a}_{i, \alpha}, \dot{F}$, such that $p_*$ forces these behave as above. 

Write $X = \lambda \backslash \bigcup \mbox{dom}(p_*)$; so $|X| =\lambda$.

Suppose $v \in [X]^{k}$. Choose $p_v \in P$ such that $p_v \geq p_* \cup \{(v, 1)\}$ (so $p_v$ forces $v \not \in \dot{R}$), and so that $p_v$ decides $\dot{F}(v)$, say $p_v$ forces that $\dot{F}(v) = f_v \in P_{\lambda \mu \theta}$.

Choose $F_*: [\lambda]^{k} \to P_{\lambda \mu \theta}$ so that for all $v, v'$, if $F_*(v)$ and $F_*(v')$ are compatible, then $p_v, p_{v'}$ are compatible, and $f_v, f_{v'}$ are compatible.

For each $u \in [\lambda]^{k-1}$ let $\mathcal{P}_u = \{F_*(v): v \in [\lambda]^k, u \subseteq v\}$ and let $\mathcal{Q}_u$ be the set of all $g \in P_{\lambda \mu \theta}$ such that $g$ extends some $f \in \mathcal{P}_u$. Choose a maximal antichain $(g_{u, \alpha}: \alpha < \kappa_u)$ from $\mathcal{Q}_u$. For each $\alpha < \kappa_u$ choose $w_{u, \alpha} \in \mathcal{P}_u$ such that $g_{u, \alpha}$ extends $F_*(w_{u,\alpha})$.

Since $P$ has the $\mu^+$-c.c., we have that each $\kappa_u \leq \mu$. For $u \in [\lambda]^{k-1}$ let $S(u) \in [\lambda]^\mu$ be sufficiently large so that each $w_{u, \alpha} \in [S(u)]^k$ and $\mbox{dom}(p_{w_{u, \alpha}}) \subseteq [S(u)]^k$.

By Theorem 46.1 of \cite{Combinatorics}, using $\lambda \geq \mu^{+\omega}$, we can find some $v \in [\lambda]^k$ such that for all $u \in [v]^{k-1}$, $S(u) \cap v = u$. Enumerate $[v]^{k-1} = (u_i: i < k)$. By induction on $i < k$ we pick $w_i \in \mathcal{P}_u$ such that $(F_*(v), F_*(w_j): j \leq i)$ is compatible in $P_{\lambda \mu \theta}$. To see this is possible, suppose we have $w_j: j < i$. Put $f = F_*(v) \cup \bigcup\{F_*(w_j): j < i\}$. Since $f$ extends $F_*(v)$, we have $f \in \mathcal{Q}_{u_i}$; by maximality of the antichain $(g_{u_i, \alpha}: \alpha < \kappa_{u_i})$, we must have that $f$ is compatible with some $g_{u_i, \alpha}$, and hence with $F_*(w_{u_i, \alpha})$, so put $w_i = w_{u_i, \alpha}$.

 Writing $v_{u_i} := w_i$, we have found $(v_u: u \in [v]^{k-1})$ such that each $u \subseteq v_u \in [S(u)]^k$, and $(F_*(v_u): u \in [w]^{k-1})$ is compatible. Thus $(p_{v_u}: u \in [v]^{k-1})$ is compatible in $P$; write $p = \bigcup_{u \in [v]^{k-1}} p_{v_u}$. Note that $v \not \in \mbox{dom}(p)$, since if $v \in \mbox{dom}(p_{v_u})$ then $v \in [S(u)]^k$, contradicting that $S(u) \cap v = u$. Thus we can choose $p' \geq p$ in $P$ with $p'(v) = 0$.

Now $p'$ forces that each $\dot{F}(v_u) = \check{f}_{v_u}$, and
$(f_{v_u}: u \in [v]^{k-1})$ is compatible;  thus $p'$ forces that
$\phi(x) := \bigwedge_{u \in [v]^{k-1}}
\phi_{v_u}(x, \dot{\overline{a}}_{k \times v_u})$ is consistent. But this
is impossible, since if we let $v_*$ be the graph of the increasing
enumeration of $v$, then $p'$ forces that
$\dot{R}^{\dot{M}}(\dot{\overline{a}}_{v_*})$ holds, and $\phi(x)$ in
particular implies that $\dot{R}^{\dot{M}}(x,
\dot{\overline{a}}_{u_*})$ holds 
for all $u_* \in [v_*]^{k-1}$, thus creating a $k$-clique.
\end{proof}

\begin{lemma}\label{TlkLemma}
Suppose $(\lambda, \theta)$ is a nice pair and $\ell > k \geq 3$. Then $\SP^1_{T_{\ell, k}}(\lambda, \lambda, \theta)$ if and only if $\SP_{T_{\ell,k}}(\lambda, \theta)$.
\end{lemma}
\begin{proof}
Forward direction is Theorem~\ref{SPVariants}(A). The reverse direction is like Theorem~\ref{SPVariants}(B): suppose $M \models T_{\ell,k}$ has $|M| \leq \lambda$ and $M_* \leq M$ is countable. Choose $N \succeq M$, a $\theta$-saturated model of size $\lambda$. Let $a_\alpha: \alpha < \lambda$ enumerate $N$. For each $\alpha < \lambda$ let $p_\alpha(x)$ be the type over $M$ asserting $x \not= a$ for each $a \in M$, and $R(x, \overline{a}) \in p_\alpha$ if and only if $R(a_\alpha, \overline{a})$ holds for each $\overline{a} \in [M]^{k-1}$. This is a complete type over $M$ which does not fork over $\emptyset$. Then $\{p_\alpha(x): \alpha < \lambda\}$ along with all realized types over $M_*$ witness $\SP^1_{T_{\ell,k}}(\lambda, \lambda, \theta)$.
\end{proof}

\begin{lemma}\label{InfManyClassesLemma}
Suppose $(\theta, \lambda)$ is a nice pair with $2^{\aleph_0} < \cof(\lambda) < \theta$. Suppose $\theta^{<\theta} = \theta$ and suppose there is $\kappa > \lambda^{<\theta}$ with $\kappa^{<\kappa} = \kappa$. Suppose $k \geq 3$. Then there is a forcing notion $P$ with $|P| = \kappa$ which is $\theta$-closed, $\theta^+$-c.c. and which forces: $(\theta, \lambda)$ is a nice pair, and for all $T$ with $<k$-type amalgamation, $\SP_T(\lambda, \theta)$ holds, and $\SP_{T_{k, k-1}}(\lambda, \theta)$ fails.
\end{lemma}
\begin{proof}
Note that any $P$ which is $\theta$-closed, $\theta^+$-c.c. will force that $(\theta, \lambda)$ is a nice pair.

Let $P = P_{\lambda \theta \theta}$. By Theorem~\ref{ForcingAxiomsCons} we can choose a $P$-name $\dot{Q}$ such that $P$ forces $\dot{Q}$ has the $(<k, \theta, \theta)$-amalgamation property, and forces Ax$(<k, \theta)$ to hold, and forces $2^{\theta} = \kappa$. We claim $P * \dot{Q}$ works.

If $T$ has $<k$-type amalgamation, then by Theorem~\ref{SatPortion} and Theorem~\ref{SatPortionPt2}, $P* \dot{Q}$ forces that $\SP^1_T(\lambda, \theta, \theta)$ holds, thus by Theorem~\ref{SPVariants}(A), $P* \dot{Q}$ forces $\SP_T(\lambda, \theta)$. 

So it suffices to show $P* \dot{Q}$ forces that $\SP_{T_{k, k-1}}(\lambda, \theta)$ fails. If $k = 3$ then this follows from $\lambda < \lambda^{<\theta}$ and the fact that $T_{3,2}$ is non-simple, by Theorem~\ref{Old}(A). So suppose $k \geq 4$. 

By Theorem~\ref{NonSatPortion} and Theorem~\ref{PresOfFail}, $P* \dot{Q}$ forces that $\SP^1_{T_{k,k-1}}(\lambda, \mu, \theta)$ fails for all $\mu < \lambda$.  By Theorem~\ref{SPVariants}(D), $P* \dot{Q}$ forces that $\SP^1_{T_{k, k-1}}(\lambda, \lambda, \theta)$ fails; by Lemma~\ref{TlkLemma}, $P * \dot{Q}$ forces that $\SP_{T_{k, k-1}}(\lambda, \theta)$ fails.

\end{proof}
\begin{theorem}\label{InfManyClasses}
Suppose $GCH$ holds. Then there is a forcing notion $P$, which forces: for every $k \geq 3$, if $T$ is a simple theory with $< k$-type amalgamation, then $T_{k, k-1} \not \leq_{SP} T$. In particular, the non-simple theories are exactly the maximal $\leq_{SP}$-theories.

\end{theorem}
Of course, we can also force to make GCH hold (via a proper-class forcing notion). Thus, this can consistently hold.
\begin{proof}
The ``in particular" clause follows since simple theories have $<3$-type amalgamation and non-simple theories are maximal by Theorem~\ref{Old}(A).

 Choose nice pairs $((\theta_n, \lambda_n): n < \omega)$, such that each $\theta_{n+1} > \lambda_{n}^{++}$ is regular, and each $\lambda_n$ is singular with $2^{\aleph_0} < \mbox{cof}(\lambda_n) < \theta_n$ (so each $\lambda_n^{<\theta_n} = \lambda_n^+$). 

We will define a full-support forcing iteration $(P_n: k \leq \omega)$, $(\dot{Q}_n: k < \omega)$; for each $n < \omega$, we will have that $|P_n| \leq \lambda_{n-1}^{++}$, and $P_n$ will force that $\dot{Q}_n$ is $\theta_n$-closed and has the $\theta_n^+$-c.c. Having defined $P_n$, we will inductively have that $|P_n| < \theta_n$ and is $(2^{\aleph_0})^+$-closed, and so $P_n$ forces $(\theta_n, \lambda_n)$ is a nice pair with $2^{\aleph_0} < \cof(\lambda) < \theta_n$ and $\theta_n^{<\theta_n} = \theta_n$ and GCH holds above $|P_n|$. Let $\dot{Q}_n$ be as supplied by Lemma~\ref{InfManyClassesLemma} with $\theta = \theta_n, \lambda = \lambda_n, \kappa = \lambda_n^{++}, k = n + 3$.

Then each $P_{n+1}$ forces that for all $T$ with $<n+3$-type amalgamation, $\SP_T(\lambda_n, \theta_n)$ holds, and $\SP_{T_{n+3, n+2}}(\lambda_n, \theta_n)$ fails. Since $\dot{Q}_{\geq n+1}$ is forced to be $\lambda_n^{++}$-closed, it does not disturb this, so we are done.

\end{proof}
\bigskip

\providecommand{\href}[2]{#2}

% \bibliographystyle{amsalpha}
% \bibliography{lista,listb,listc,listd,liste,listf,listv,listx,listy,listz}

\end{document}